\numberwithin{equation}{section}
\newcommand{\R}{\mathbb R}
\newcommand{\C}{\mathbb C}
\newcommand{\CHat}{{\widehat \C}}
\newcommand{\HP}{\mathbb H}
\newcommand{\N}{\mathbb N}
\newcommand{\Q}{\mathbb Q}
\newcommand{\Z}{\mathbb Z}
\theoremstyle{theorem}
\newtheorem{theo}{Theorem}
\newtheorem{lema}{Lemma}
\newtheorem{prop}{Proposition}
\newtheorem{coro}{Corollary}
\theoremstyle{definition}
\newtheorem{defi}{Definition}
\newtheorem{exa}{Example}
\theoremstyle{remark}
\newtheorem*{rema}{Remark}
\newtheorem*{note}{Note}
\begin{document}

\title{Dynamical aspects of piecewise conformal maps}

\author{Renato Leriche$^1$}
\address{{[}Renato Leriche$^1${]} }
\email{r\_lerichev@ciencias.unam.mx}

\author{Guillermo Sienra$^1$}
\address{{[}Guillermo Sienra$^1${]} }
\email{gsl@dinamica1.fciencias.unam.mx}

\address{$^1$ Depto. de Matem\'aticas, Fac. Ciencias, UNAM (Universidad Nacional Aut\'onoma de M\'exico), Ciudad de M\'exico, M\'exico.}

\subjclass[2010]{37F05, 37F15, 37F50, 37F99.
}

\keywords{Piecewise conformal maps, Julia and Fatou sets, Spider Web set, Kleinian
groups, Schottky groups, Limit set, Structural Stability.}

\begin{abstract}
We study the dynamics of piecewise conformal maps in the Riemann sphere. The normality and chaotic regions are defined and we state several results and properties of these sets. We show that the stability of these piecewise maps is related to the Kleinian group generated by their transformations  under certain hypotheses.
The general motivation of the article is to compare the dynamics of piecewise conformal maps and those of the Kleinian groups and iterations of rational maps.
\end{abstract}

\thanks{\emph{Acknowledgments.} This work was partially supported by PAPIIT IN 102515 and CONACYT CB15/255633.}

\maketitle

\section{introduction}

The study of the dynamics of piecewise maps comes from a variety of
contexts, such as the piecewise interval maps (see for instance \cite{V},
\cite{C}), the piecewise maps of isometries of the plane (see for
instance \cite{G1}, \cite{G2}) and some applications of differentiable
piecewise maps (see for instance \cite{BBCK}, \cite{PL}).

However, our main motivation, is to extend the so called Sullivan
dictionary of conformal dynamics to piecewise conformal maps. In such
dictionary there are involved the dynamics of iterations of holomorphic
maps of the Riemann sphere and the dynamics of Kleinian groups. In
both dynamics there is a duality in the behavior of the orbits of
points: the conservative and the dissipative part, being both, invariant
sets under the dynamics. The conservative part is called the Julia
set in holomorphic dynamics and it is called the limit set in the
dynamics of Kleinian groups and it is where the dynamics is more interesting.
The Fatou set, in the other hand, is the region of normality for
the set of iterations of a map, see \cite{M}.


In this paper we are interested in piecewise maps on the Riemann sphere
$\CHat$ which are restrictions of conformal automorphisms in each
piece. The equivalent to the Julia set, in this context, is the pre-discontinuity
set, following \cite{Cr}, and its complement is the regularity region.
In Section \prettyref{subsec:Generalities} we include the relevant
definitions. The regularity regions are very well known in the case
of iterations of holomorphic maps (\cite{Be}), in our case is less
complicated and \prettyref{thm:Classif-Fatou} in Section \prettyref{subsec:Classification-of-Fatou}
gives the classifications of the regularity regions. A result in holomorphic
dynamics shows that the connectivity of a regularity region (Fatou
domains) is 0, 1 or ${\infty}$, in Section \prettyref{subsec:Connectivity}
we show that in the iteration of piecewise conformal maps, the regularity
region can have any connectivity, from zero to infinity. In Section
\prettyref{subsec:Symbolic-dynamics}, we explain the symbolic dynamics
related to a piecewise map.

A celebrated theorem of Sullivan (\cite{S}), shows that in the dynamics
of rational maps, there are not wandering Fatou domains, although
for certain entire transcendental maps there are (\cite{Be}). In
Section \prettyref{subsec:Wandering-Domains} we show that it is the
case that there are piecewise conformal maps with wandering domains. In holomorphic dynamics we can find Julia sets which are the whole Riemann sphere. We show in Section \prettyref{subsec:Whole-sphere}
an example of piecewise conformal map whose pre-discontinuity set is the whole
sphere.

In \prettyref{sec:Relation-to-Kleinian}, we relate the dynamics of
a piecewise conformal map $F$ to a certain Kleinian group. To do so, we consider
the extension of each conformal automorphism to the whole sphere and
we consider the group $\Gamma_{F}$ generated by these extended maps.
\prettyref{thm:Alpha-Limit} and \prettyref{thm:Omega-Limit} shows
the relation between the $\alpha$ and $\omega$-limits of $F$ and
the limit set of the group $\Gamma_{F}$, ${\Lambda}(\Gamma_{F})$.

In \prettyref{sec:Deformations-and-Stability}, we study some aspects
of the deformations and stability of a given piecewise conformal map. \prettyref{thm:ContDefSpidN}
and \prettyref{thm:ContDefSpid} show that if we deform in a continuous
manner the boundary of the regions initially involved, then the pre-discontinuity set deforms
also in a continuous way, subject to certain hypothesis. Finally,
\prettyref{thm:StrucStabSchottky} gives conditions, related to the
group $\Gamma_{F}$, for a piecewise conformal map to be structurally
stable.

We conclude this paper with two complementary sections. \prettyref{sec:Examples}
deals with some examples and images of the theory.
Finally \prettyref{sec:Technical-lemmas} contains some technical
results as well as the combinatorics on the pre-discontinuity set.

\section{Piecewise Conformal Maps\label{sec:Piecewise-Conformal-Maps}}

\subsection{Generalities\label{subsec:Generalities}}

In this paper we will denote by ${\C}$ the complex plane and $\CHat$
the Riemann sphere. We begin this section with the definition of a
piecewise map in the sphere and we define the regions of conformality
and discontinuity.

\begin{defi}A \textbf{piecewise conformal map}, is a pair $(P,F)$,
where:
\begin{enumerate}
\item $P=\left\{ R_{m}\right\} $ is a finite partition of $\CHat$,
that is, $\bigcup_{m}R_{m}=\CHat$ and $R_{m}\cap R_{n}=\emptyset$
for all $m\neq n$. Additionally:
\begin{enumerate}
\item The interior of each $R_{m}$ is non empty.
\item The boundary of each $R_{m}$, denoted $\partial R_{m}$, is a finite
union of simple closed curves.
\end{enumerate}
\item $F:\CHat\rightarrow\CHat$ is a map satisfying that $F:R_{m}\rightarrow F(R_{m})$
is the restriction of a conformal automorphism of $\CHat$.
\end{enumerate}
\end{defi}

Following \cite{Cr}, associated to any piecewise conformal map, there
is a \textbf{set of discontinuity} denoted by $\partial R$, the set
$\bigcup_{m}\partial R_{m}$, which is the union of the boundaries
of the sets $R_{m}$. Whereas that $\bigcup_{m}\mathrm{int}(R_{m})$,
the union of interior of the sets $R_{m}$, is the \textbf{region
of conformality}.

For a set $A\subset\CHat$, define $F^{-n}(A)=\left\{ z\in\CHat:F^{n}(z)\in A\right\} $.
We have the next

\begin{defi}The \textbf{pre-discontinuity set}\footnote{This set was originally
named "Spider Web" (see \cite{Cr}) because of the visual resemblance in some cases, but being mathematically unintuitive we prefer the one given in this paper.} of a piecewise conformal map
$F$ is the set 
\[
\mathcal{PD}(F)=\overline{\bigcup_{n\geq0}F^{-n}(\partial R})
\]

\end{defi}

The pre-discontinuity set of $F$ has a natural stratification by the subsets
$\partial R=\mathcal{PD}_{0}(F)\subset \mathcal{PD}_{1}(F)\subset...\subset \mathcal{PD}_{n}(F)\subset...\subset \mathcal{PD}(F)$,
where $\mathcal{PD}_{n}(F)=\bigcup_{k=0}^{n}F^{-k}(\partial R)$, for $n\geq0$.

\begin{defi}The $\alpha$\textbf{-limit set }of a piecewise conformal
map $F$ is 
\[
\alpha(F)=\mathcal{PD}(F)-\bigcup_{n\geq0}F^{-n}(\partial R)
\]

\end{defi}

Hence we have that for any $z\in \mathcal{PD}(F)-\alpha(F)$, $z\in \mathcal{PD}_{n}(F)$
for some natural $n\geq0$ and so $F^{n}(z)\in\partial R$, the discontinuity
set.

From the discussion above, it follows that the family $\{F^{n}(z)\}_{n=1}^{\infty}$
is not normal if and only if $z\in \mathcal{PD}(F)$. Hence the pre-discontinuity set
is the equivalent to the Julia set for holomorphic dynamics. The complement
of the pre-discontinuity set is consequently the \textbf{region of regularity},
equivalently, the \textbf{Fatou set} as it is called in holomorphic
dynamics. A good reference for definitions and properties of those
sets in rational dynamics is the book of Milnor (\cite{M}). For further
properties about the pre-discontinuity set and the Fatou set of piecewise conformal
maps, see \prettyref{sec:Technical-lemmas}.



\subsection{Classification of Fatou components.\label{subsec:Classification-of-Fatou}}

In this section we classify the periodic regions of regularity when
each of the maps involved are conformal automorphisms.

If $U$ is a connected component of the Fatou set, we say that $U$
is periodic if $F^{n}(U)\subset U$ for some $n>0$. The minimum number
$n$ with that property is the period of the component. Then for $U$
a connected component of the Fatou set, either there exists $k>0$
such that $F^{k}(U)$ is periodic or does not exist such $k$. In
the first case we say that $U$ is preperiodic and in the second $U$
is wandering.

The classification of periodic Fatou components of piecewise conformal
automorphisms turns out to be rather simple and it is as follows (see
\prettyref{sec:Examples} for examples).

Observe that if $U$ is periodic and $F^{n}(U)\subset U$, then $F^{n}|_{U}:U\rightarrow U$
is a composition of conformal maps hence it is the restriction of
a M{\"o}bius transformation.

\begin{theo}\label{thm:Classif-Fatou}Let $F$ be a piecewise conformal
map and $U$ a periodic component of $F$ of period $n$, then we
have the following cases:

\emph{(i)} There is a fixed point under $F^{n}|_{U}$ inside $U$.
In this case either $F^{n}|_{U}$ is \emph{(a)}: a loxodromic transformation
with its attracting fixed point in $U$, \emph{(b)}: $F^{n}|_{U}$
is an elliptic transformation with at least one of their fixed point
inside $U$, or \emph{(c)}: $F^{n}|_{U}$ is the identity.

\emph{(ii)} There is a fixed point of $F^{n}|_{U}$ in the boundary
of $U$, so \emph{(a)}: $F^{n}|_{U}$ is an hyperbolic transformation,
\emph{(b)}: $F^{n}|_{U}$ is a parabolic transformation, or \emph{(c)}:
$F^{n}|_{U}$ is the identity.

\emph{(iii)} There is not a fixed point of $F^{n}|_{U}$ in $U$,
so $F^{n}|_{U}$ is an elliptic transformation.

\end{theo}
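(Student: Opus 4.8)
The plan is to reduce the whole statement to the single map $T:=F^{n}|_{U}$, which by the observation preceding the theorem is the restriction of a Mobius transformation satisfying $T(U)\subseteq U$, and then to read the three cases off the classical classification of Mobius maps (identity; elliptic; parabolic; loxodromic, the last type including the real-multiplier hyperbolic maps) together with the location of the fixed points relative to $U$. I interpret the trichotomy as: (i) a fixed point lies in the open set $U$; (ii) none lies in $U$ but one lies on $\partial U$; (iii) no fixed point meets $\overline{U}$. The organizing principle is forward invariance: if $T$ is neither the identity nor elliptic, then every forward orbit converges either to the attracting fixed point (loxodromic or hyperbolic) or to the unique fixed point (parabolic), and since $T(U)\subseteq U$ keeps these orbits in $U$, that fixed point must lie in $\overline{U}$. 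This sorts the non-elliptic maps by whether the relevant fixed point is interior (case (i)) or on $\partial U$ (case (ii)), and it forces the remaining possibility, no fixed point in $\overline{U}$, to be elliptic, which is case (iii).

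First I would record two auxiliary facts. (1) The boundary is forward invariant: since $\mathcal{PD}(F)$ is completely invariant one has $T(\mathcal{PD}(F))=\mathcal{PD}(F)$, and from $\overline{U}\cap\mathcal{PD}(F)=\partial U$ together with $T(\overline{U})=\overline{T(U)}\subseteq\overline{U}$ one gets $T(\partial U)\subseteq\partial U$. (2) The set $\partial U$ has no isolated points, being contained in $\mathcal{PD}(F)$, the closure of a union of arcs. With these in hand I would dispatch the interior case (i). A parabolic map is excluded because its unique fixed point cannot be interior: writing $K=\CHat\setminus U$, the relation $T(U)\subseteq U$ gives $K\subseteq T(K)$, so the backward orbit of any point of $K$ accumulates at the parabolic fixed point and forces it into the closed set $K$, contradicting interiority. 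The same complement argument shows that when $T$ is loxodromic any interior fixed point must be the attracting one, since a repelling interior fixed point would pull the complement onto itself and again place the point in $K$. Thus case (i) is exactly loxodromic with attracting fixed point inside, elliptic with a fixed point inside, or the identity.

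For the elliptic and fixed-point-free situations I would use that an elliptic $T$ is conjugate to a rotation and therefore preserves a finite smooth measure $\mu$ of full support; then $T(U)\subseteq U$ and $\mu(T(U))=\mu(U)$ force $T(U)=U$, so an interior fixed point gives (i)(b), and when no fixed point meets $\overline{U}$ the only surviving type is elliptic, which is (iii). The genuinely delicate point, and the one I expect to be the main obstacle, is the boundary case (ii): I must show that a fixed point on $\partial U$ forces $T$ to be hyperbolic or parabolic and excludes a genuinely loxodromic (non-real multiplier) map. Linearizing at the boundary fixed point $a$ as $z\mapsto\lambda z$ and using that $\overline{U}$, hence the cone of inward directions at $a$, is carried into itself, the induced action on directions is rotation by $\arg\lambda$; a proper nonempty rotation-invariant set of directions is impossible when $\arg\lambda$ is an irrational multiple of $2\pi$, by minimality of the irrational rotation, which already kills the irrational-spiral case.

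The stubborn remaining case is the rational spiral, where $\arg\lambda$ is a nonzero rational multiple of $2\pi$. Here I would rule out spiral boundaries directly, invoking fact (1) and the structure of $\mathcal{PD}(F)$: a forward-invariant $\partial U$ accumulating at $a$ with $\arg\lambda\neq0$ would force $\partial U$ to contain the spiralling family $\{\lambda^{k}\gamma\}$ of Mobius images of an initial arc $\gamma$, which is incompatible with $\partial U$ locally separating $U$ from $K$ at the non-isolated boundary point $a$. Once spiralling is excluded one has $\lambda>0$ and $T$ is hyperbolic; alternatively $T$ has a single boundary fixed point and is parabolic, and the identity covers the degenerate subcase, completing (ii). The hardest step throughout is thus the exclusion of non-real multipliers at a boundary fixed point, because it forces one to control the a priori irregular set $\partial U$ near $a$ using only its invariance and its membership in $\mathcal{PD}(F)$.
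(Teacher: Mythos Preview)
Your core strategy is exactly the paper's: reduce to the single M\"obius map $T=F^{n}|_{U}$, invoke the loxodromic/parabolic/elliptic trichotomy, and use forward invariance $T(U)\subset U$ to locate the relevant fixed point in $\overline{U}$. The paper's proof is in fact much shorter than yours: it simply observes that a forward-invariant open set for a loxodromic map must contain the attractor in its closure (giving (ia) or (iia)), that a parabolic fixed point lands on $\partial U$ (giving (iib)), and that the elliptic case splits into (ib) and (iii) according to whether a fixed point lies in $U$. It does not argue any further.

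Where you go beyond the paper is in trying to prove that in case (ii) the map is genuinely \emph{hyperbolic} (real positive multiplier) rather than strictly loxodromic. The paper's own proof does not justify this distinction; it just writes ``(ia) or (iia)'' for the loxodromic case without separating hyperbolic from spiral loxodromic. So you are attempting to patch something the authors leave open. Unfortunately your patch has two weak points. First, your fact (1) asserts $T(\partial U)\subset\partial U$ via complete invariance of $\mathcal{PD}(F)$, but the paper only establishes \emph{backward} invariance of $\mathcal{PD}(F)$ (Proposition~\ref{prop:InvariantSets}); forward invariance can fail, since $F(\partial R)$ need not lie in $\mathcal{PD}(F)$. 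Moreover, $T$ is only the M\"obius extension of $F^{n}|_{U}$, so on $\partial U$ it need not agree with $F^{n}$ at all. Second, your exclusion of the rational-spiral case is not convincing: two interleaved $T$-invariant logarithmic spirals converging to the fixed point \emph{do} locally separate the plane into two regions, so ``incompatible with $\partial U$ locally separating $U$ from $K$'' does not by itself rule out such boundaries.

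In short: for what the paper actually proves, your argument is correct and essentially identical, only more carefully written. The additional work you do on the hyperbolic/loxodromic distinction is not required to match the paper, and as it stands it does not close the gap you correctly identified.
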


\begin{proof} The map $F^{n}|_{U}$ $:U\rightarrow U$ is a M\"obius
transformation. Recall that M\"obius transformations are classified
in loxodromic, parabolic and elliptic, see \cite{A} for instance.
First, let us assume that $F^{n}|_{U}$ is not the identity map.

Forward invariant open sets (that is $F^{n}(U)\subset U$) of loxodromic
transformations must contain the attracting fixed point or have it
on its boundary, that is case (ia) or (iia).

Since the boundary of an open Fatou connected domain is contained
in the pre-discontinuity set of $F$, then for a parabolic transformation, its
fixed point is in the boundary of the domain, that is (iib).

If the transformation is elliptic, then the Fatou domain must be invariant
under rotations and that is (ib) if it contains one fixed point or
(iii) if not. If $F^{n}|_{U}$ is the identity map then we have (ic)
or (iic). \end{proof}

\begin{rema}In case (ia) for all $z\in U$, then $F^{nk}(z)$ tends
to $p$, the fixed point of $F^{n}|_{U}$, when $k$ tends to $\infty$
(see figure \ref{fig:attr}).

In case (ib) $F^{n}|_{U}$ is periodic if the angle of rotation of
$F^{n}|_{U}$ is rational (see figure \ref{fig:rot}) or it is quasiperiodic
if the angle of rotation of $F^{n}|_{U}$ is irrational (see figure
\ref{fig:rotirr}).

In case (iia) or (iib) for all $z\in U$, we have that $F^{nk}(z)$
tends to $p$, the fixed point of $F^{n}|_{U}$, when $k$ tends to
$\infty$ (see figure \ref{fig:parab}).

Case (iii) behaves like (ib) but without fixed points inside $U$.\end{rema}

\begin{rema}About elliptic transformations, we can do a more detailed
analysis. A $F^{n}|_{U}$ elliptic can contain two fixed points, but $U\neq \CHat$ then is not simply connected (see figure \ref{fig:rot2fix}).

If $F^{n}|_{U}$ is elliptic without fixed points in $U$, then such component
is not simply connected because $U$ must contain one invariant simple closed curve $C$
separating the fixed points (see figure \ref{fig:rotann}).

If $U$ is a simply connected periodic component and $F^{n}|_{U}$
is not loxodromic, parabolic or elliptic with one fixed point in $\overline{U}$,
then $F^{n}|_{U}$ can not be elliptic with fixed points outside $U$.
Therefore for such $U$, $F^{n}|_{U}=Id$
(see figures \ref{fig:rotneutr} and \ref{fig:rotext}).\end{rema}


\subsection{Connectivity of the Fatou components.\label{subsec:Connectivity}}

In rational dynamics it is known that the connectivity of the Fatou
set is one, two or infinity, see \cite{M}. Here we show that for
piecewise conformal dynamics the connectivity of the regularity set
can be any natural number or infinity.

\begin{exa}For $k$ a positive natural number, let $D_{k}$ the disc
with center at $z=k$ and small radius, say $r=(1/2)(k\tan(\pi/k))$,
$S_{k}$ its boundary and $D_{k}^{c}$ its complement. Define $g(z)=2z,$
if $z\in D_{k}$ and $f(z)=e^{2\pi i/k}z$ if $z\in D_{k}^{c}$. The
map $f(z)$ is a rotation. The map $F$ is generated by $f$ and $g$
.

Observe that the set $\left\{ f^{-j}(S_{k})\right\} _{j=1}^{k}$of
$k$ circles is contained in $\mathcal{PD}(F)$ and in fact $\mathcal{PD}(F)\subset\cup_{j=1}^{k}f^{-j}(D_{k})$
. That means that the complement of the $k$ discs $\left\{ f^{-i}(D_{k})\right\} _{j=1}^{k}$
is a region of regularity of $F$ with $0$ and $\infty$ as elliptic
fixed points. Such region has connectivity $k.$ The result does not
depend on the choice for $g$. See figure \ref{fig:conn}.\end{exa}

It is left to show that there is a region of regularity with infinity
connectivity.

\begin{exa}Consider $D$ the disc with center at $1$ and radius
$1/3.$ Choose $g(z)$ any M\"obius transformation, if $z\in D$
and $f(z)=2z$ if $z$ is in the complement of $D$. The map $F$
is generated by $f$ and $g$ .

Notice that the set $\left\{ f^{-j}(D)\right\} _{j=1}^{\infty}$ is
a disjoint set of discs converging to $0$ and of radius tending to
$0$. It is clear that $\mathcal{PD}(F)\subset\cup_{j=1}^{\infty}\left\{ f^{-j}(D)\right\} $
. The complement of such sequence of disc is a region of regularity
of $F$ with infinite connectivity. See figure \ref{fig:conninfty}.\end{exa}

\subsection{Symbolic dynamics.\label{subsec:Symbolic-dynamics}}

For a piecewise conformal automorphism $(P,F)$, the partition $P=\left\{ R_{m}\right\} _{m=1}^{M}$
leads naturally to a coding map, following the reasoning in \cite{G1}
for piecewise isometries. The coding space is the set of infinite
sequences of $M$ symbols $\Sigma_{M}=\left\{ 1,\dots,M\right\} ^{\N}$
and the coding map $I_{F}:\CHat\rightarrow\Sigma_{M}$ is defined
as $I_{F}(z)_{k}=m$ if $F^{k}(z)\in R_{m}$, where $I_{F}(z)_{k}=m_{k}$
is the $k$-th entry of the coding generated sequence $I_{F}(z)=(m_{1},m_{2},m_{3},\dots,m_{k},\dots)$.
The map $I_{F}$ is called \textbf{itinerary} because encodes the
forward orbit of a point by recording the indexes of visited sets
$R_{m}$.

A coding partition in cells $\left\{ \mathcal{C}_{s}\right\} _{s\in\Sigma_{M}}$
over $\CHat$ is induced by the equivalence relation $z\thicksim w$
if and only if $I_{F}(z)=I_{F}(w)$. As we can see from \cite{Cr},
the pre-discontinuity set is $\overline{\bigcup_{s\in\Sigma_{M}}\partial\mathcal{C}_{s}}$
and the Fatou set is $\bigcup_{s\in\Sigma_{M}}\mathrm{int}(\mathcal{C}_{s})$,
the union of interior of the partition cells. This give us a relevant
fact: components of Fatou set contains only points with the same itinerary
(see figure \ref{fig:itin}).

The shift map $\sigma:\Sigma_{M}\rightarrow\Sigma_{M}$, $\sigma(m_{1},m_{2},m_{3}\dots)=(m_{2},m_{3}\dots)$
is semi-conjugated to $F$ because $I_{F}\circ F(z)=\sigma\circ I_{F}(z)$
but $I_{F}$ is many-to-one for most of the sphere, since each partition
cell could contain more than one point.

We classify the points of $\CHat$ in two sets according to the
itinerary. Points with rational itinerary are those belonging to periodic
or eventually periodic cells. Points with irrational itinerary, set
denoted as $\mathcal{I}(F)$, are in wandering cells. By definition,
both of rational and irrational sets are invariants.

\bigskip

\begin{rema}If $F$ has a wandering domain $U$, then $U\subset\mathcal{I}(F)$,
because the itinerary of each point in $U$ is not preperiodic.\end{rema}

\begin{rema}According to \cite{G1}, the exceptional set is defined
as $E=\mathcal{I}(F)$ for piecewise isometries and prove that on
a space of finite Lebesque measure, every cell $\mathcal{C}$ of positive
measure is eventually periodic. As for each of such cells $\mathrm{int}(\mathcal{C})$
is a subset of the Fatou set, then $E\subset \mathcal{PD}(F)$ in case of
$F$ piecewise isometry defined in a subset of finite Lebesgue measure
of $\C$.\end{rema}

\subsection{Wandering Domains.\label{subsec:Wandering-Domains}}

In this section we will show that there are piecewise conformal maps
with wandering domains. We present two examples.

\begin{exa}Here, our construction relies in \cite{C} Theorem A,
that proves the existence of a wandering domain for an affine interval
exchange transformation of an interval. We will explain the basic
facts and extend the construction to piecewise conformal maps.

Consider $I=[0,1)\subset{\R}$ and $0=y_{0}<y_{1}<y_{2}<...<y_{m-1}<y_{m}=1$
a finite sequence. We say that $T:[0,1)\rightarrow[0,1)$ is an affine
interval exchange transformation (AIET) if for all $i\in\{0,1,...,m-1\}$,
the restriction $T|_{[y_{i},y_{i+1})}$ is continuously differentiable
and its derivative is identically equal to a constant $\beta_{i}>0$.
Hence $T|_{[y_{i},y_{i+1})}(x)=\beta_{i}x+r_{i,}$, with $r_{i}$
a real number. The AIET in Theorem A of \cite{C} has $m=4$, the
$y_{i}$ depending on the derivatives $\beta_{j}$ which in turn depend
on a certain Perron-Frobenius matrix.

To construct our example consider the following piecewise conformal
dynamical system $F$:

Fix $y_{i}$, $\beta_{i}$ and $r_{i}$, $i\in\{0,1,2,3\}$ as in
Theorem A of \cite{C}. Let the set of discontinuity be the union
of the lines $L_{i}=\left\{ z:Re(z)=y_{i},i\in\{0,1,2,3\}\right\} $.
Let $F$ be such that $F(z)=z$ if $Re(z)<0$ or $Re(z)>1$ and $F(z)=\beta_{i}z+r_{i}$
if $y_{i}<Re(z)<y_{i+1}.$ Observe that since each line $L_{i}$ is
orthogonal to the real line then each line in $\mathcal{PD}(F)$ is orthogonal
to the real line and its complement is a union of vertical strips.
The restriction of such complement to the real line contains the wandering
interval of $T$, therefore $F$ has a wandering strip.

We can construct a similar piecewise conformal transformation using
discs instead of strips as follow: For consecutive $y_{i}$ and $y_{i+1}$
consider the disc $R_{i}$ with diameter $y_{i+1}-y_{i}$ and center
$\frac{1}{2}(y_{i}+y_{i+1})$. Let $F$ such that $F(z)=\beta_{i}z+r_{i}$
in each disc $R_{i}$ and $F(z)=z$ outside all of discs. Then the
associated pre-discontinuity set is an infinite union of arcs of circumferences.
As in the previous case, the restriction to the real line must contain
the wandering domain inherited from the AIET and then $F$ has a wandering
component of the Fatou set.

In the terminology of \cite{AG}, our map $F$ is close to the concept
of a cone exchange transformation, except that we are allowing affine
interval exchanges rather than the more restrictive isometric interval
exchanges.

Other version to \cite{C} with different properties can be found
in \cite{B} for instance.\end{exa}

\begin{exa}Here, we show that there exist a piecewise conformal map with all of the
components of the regular set wandering. Let be $R=\left\{ z:\,Im(z)<0\right\} $,
$T|_{R}(z)=f(z)=iz$ and $T|_{R^{c}}(z)=g(z)=-iz+1+i$. Notice that
$f$ and $g$ are both euclidean rotations. First, we have $\mathcal{PD}(T)=\left\{ z:\,Re(z)\in\Z\,\mathrm{or}\,Im(z)\in\Z\right\} $,
then the Fatou set is formed by open squares which elements have not
integer coordinates.

Let be $c_{n}=a+bi\in(0,1)\times(n,n+1)\subset R^{c}$ , where $n\in\N$.
Calculating the orbit of $c_{n}$: 
\[
\begin{aligned}T(c_{n}) & = & g(c_{n}) & = & b+1+(1-a)i & \in R^{c}\\
T^{2}(c_{n}) & = & g\circ g(c_{n}) & = & 2-a-bi & \in R\\
T^{3}(c_{n}) & = & f\circ g\circ g(c_{n}) & = & b+(2-a)i & \in R^{c}\\
 & \vdots\\
T^{2n+3}(c_{n}) & = & (f\circ g)^{n+1}\circ g(c_{n}) & = & b-n+(n+2-a)i & \in(0,1)\times(n+1,n+2)
\end{aligned}
\]

Then, the itinerary of $c_{n}$ is 
\[
(1,\overbrace{1,0}^{n+1\,\mathrm{times}},1,\overbrace{1,0}^{n+2\,\mathrm{times}},1,\overbrace{1,0}^{n+3\,\mathrm{times}},\dots),
\]
clearly a irrational sequence and in consequence, as we saw in Section
\ref{subsec:Symbolic-dynamics}, the square-component containing $c_{n}$
is wandering (see figure \ref{fig:wander}).

Let be $Q_{I}=(0,\infty)\times(0,\infty)\subset R^{c}$. The transformation
$f\circ g$ is the translation $z\mapsto z-1+i$, which is applied
to points $z\in Q_{I}$ with $Re(z)>1$ and $Im(z)>0$, whose orbit
must reach a wandering square $(0,1)\times(n,n+1)\subset Q_{I}$.
Then, all components in $Q_{I}$ are wandering. Define now $Q_{II}=(0,\infty)\times(-\infty,0)\subset R$,
$Q_{III}=(-\infty,0)\times(-\infty,0)\subset R$ and $Q_{IV}=(-\infty,0)\times(0,\infty)\subset R^{c}$.
Observe that $T(Q_{III})=Q_{II}$, $T(Q_{II})=Q_{I}$, $T(Q_{IV})\subset Q_{I}$
and $T(Q_{I})\subset Q_{I}\cup Q_{II}$, then we can conclude that
the orbits for all points visit the set $Q_{I}$, and in consequence,
all components in the regular set must be wandering.\end{exa}

A well known theorem of Sullivan establish that components in the
Fatou set of rational functions in the Riemann sphere are not wandering
(see \cite{S}). About piecewise maps exists some results in this
direction. When $X$ is a metric space with finite Lebesgue measure
and $F:X\rightarrow X$ is a piecewise isometry, then every component
in the Fatou set is eventually periodic (see \cite{G1}, Proposition 6.1). From the
later theorem we have the following

\begin{coro}If $F:\CHat\rightarrow\CHat$ is a piecewise conformal
map where $\partial R$ is bounded, $\infty\in R_{1}$, $F|_{R_{1}}$
is an euclidean rotation and $F|_{R_{m}}$ is an euclidean isometry
for $m>1$, then every component in the Fatou set is eventually periodic.\end{coro}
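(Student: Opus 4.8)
The plan is to reduce the statement to the cited result of \cite{G1}, Proposition 6.1, by producing a bounded, forward--invariant disc on which $F$ restricts to a piecewise isometry of a space of finite Lebesgue measure, and then to check separately that the unique Fatou component meeting the exterior of that disc is periodic.

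First I would read off the geometry of the partition. Since $\partial R$ is bounded, fix $\rho>0$ with $\partial R\subset\{|z|<\rho\}$. The set $\{|z|\ge\rho\}$ is connected, contains $\infty$, and meets no boundary curve, so it lies in a single component of conformality; as $\infty\in R_{1}$ this forces $\{|z|\ge\rho\}\subset\mathrm{int}(R_{1})$ and hence $R_{m}\subset\{|z|<\rho\}$ for every $m>1$. Writing $F|_{R_{1}}(z)=e^{i\theta}(z-z_{0})+z_{0}$ for the rotation and noting that each $F|_{R_{m}}$ with $m>1$ is an orientation--preserving euclidean isometry taking the bounded set $R_{m}$ to a bounded set, I would choose $\rho''>\rho+|z_{0}|$ so large that the disc $B=\{|z-z_{0}|\le\rho''\}$ contains $\partial R$, every $R_{m}$ with $m>1$, and every image $F(R_{m})$ with $m>1$ in its interior.

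The key step is two invariance properties of $B$. Forward invariance $F(B)\subseteq B$ is immediate: if $z\in B\cap R_{1}$ then $|F(z)-z_{0}|=|z-z_{0}|\le\rho''$, while if $z\in R_{m}$ with $m>1$ then $F(z)\in F(R_{m})\subset\mathrm{int}(B)$. For the pre-discontinuity set I would prove by induction that $F^{-n}(\partial R)\subset\{|z-z_{0}|\le\rho_{0}\}$ with $\rho_{0}=\rho+|z_{0}|$, using that $\{|z|<\rho\}\subset\{|z-z_{0}|<\rho_{0}\}$: if $F(z)$ already lies in this disc, then either $z\in R_{1}$, so $|z-z_{0}|=|F(z)-z_{0}|\le\rho_{0}$, or $z\in R_{m}$ with $m>1$, so $z\in R_{m}\subset\{|z|<\rho\}\subset\{|z-z_{0}|<\rho_{0}\}$. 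Taking closures gives $\mathcal{PD}(F)\subset\{|z-z_{0}|\le\rho_{0}\}\subset\mathrm{int}(B)$, since $\rho''>\rho_{0}$. Consequently the connected invariant open set $\{|z-z_{0}|>\rho_{0}\}$, on which $F$ acts as the rotation, lies in the Fatou set; it contains $\infty$, so it is contained in one Fatou component $U_{\infty}$, and as $F(U_{\infty})$ is a Fatou component containing the fixed point $F(\infty)=\infty$ we get $F(U_{\infty})\subseteq U_{\infty}$, i.e. $U_{\infty}$ is periodic of period $1$.

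Finally I would invoke the cited theorem. The disc $B$ is a metric space of finite Lebesgue measure, forward invariant under $F$, and $F|_{B}$ is a piecewise isometry for the partition $\{R_{m}\cap B\}$, so by \cite{G1}, Proposition 6.1, every component of the Fatou set of $F|_{B}$ is eventually periodic. Because $\mathcal{PD}(F)\subset\{|z-z_{0}|\le\rho_{0}\}$, every Fatou component of $F$ other than $U_{\infty}$ is disjoint from the connected set $\{|z-z_{0}|>\rho_{0}\}\subset U_{\infty}$ and hence contained in $\mathrm{int}(B)$; since $B$ is forward invariant, normality of $\{F^{n}\}$ on such a component is decided entirely inside $B$, so these are precisely the Fatou components of $F|_{B}$ and are eventually periodic. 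Together with the periodicity of $U_{\infty}$ this yields the claim. The main obstacle is the construction of $B$ with the forward-- and preimage--invariance properties: this is exactly what converts the infinite--measure plane into the finite--measure setting demanded by \cite{G1}, and the hypothesis that $F|_{R_{1}}$ is a \emph{rotation} (rather than a general isometry) is what makes a disc centred at $z_{0}$ invariant and the component of $\infty$ periodic.
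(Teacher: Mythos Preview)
Your proof is correct and follows essentially the same route as the paper: build a forward--invariant disc centred at the finite fixed point of the rotation $F|_{R_1}$, observe that the restriction of $F$ to this disc is a piecewise euclidean isometry on a finite--measure space, and invoke \cite{G1}, Proposition~6.1. The paper's own argument is a terse two--line version of exactly this; you have supplied the details it omits, in particular the explicit verification that $\mathcal{PD}(F)$ stays inside a smaller concentric disc and the separate treatment of the unbounded Fatou component $U_{\infty}$, both of which the paper leaves implicit in the phrase ``the result follows''.
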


\begin{proof}Since $\partial R$ is a bounded set, exists a disc
$D\subset\C$ centered in the finite fixed point of $F|_{R_{1}}$
such that $\partial R\subset\overline{R_{1}^{c}}\subset D$ and $F^{n}(R_{m}\cap D)\subset D$
for each $R_{m}$ and for all $n$. Then $F|_{D}$ is a piecewise
(euclidean) isometry in a finite Lebesgue measure set and the result
follows.\end{proof}

In relation with the pre-discontinuity set, we establish the next

\begin{prop}If $F$ is a piecewise conformal map such that $\mathcal{PD}(F)=\mathcal{PD}_{N}(F)$
for some $N$, then every component in the Fatou set is eventually
periodic.\end{prop}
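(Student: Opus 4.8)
The plan is to exploit the fact that the stabilization hypothesis $\mathcal{PD}(F)=\mathcal{PD}_{N}(F)$ forces the pre-discontinuity set to be a \emph{finite} union of arcs, so that the Fatou set has only finitely many components, and then to run a pigeonhole argument on the induced dynamics on this finite set of components.

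First I would record the two consequences of the hypothesis. Since $\mathcal{PD}_{N}(F)=\bigcup_{k=0}^{N}F^{-k}(\partial R)\subseteq\bigcup_{n\geq0}F^{-n}(\partial R)\subseteq\mathcal{PD}(F)=\mathcal{PD}_{N}(F)$, all of these sets coincide; in particular the closure in the definition of $\mathcal{PD}(F)$ adds nothing, so $\mathcal{PD}(F)=\bigcup_{n\geq0}F^{-n}(\partial R)$ and $\alpha(F)=\emptyset$. Next I would show that $\mathcal{PD}(F)$ is a finite union of analytic arcs: $\partial R$ is a finite union of simple closed curves, and on each piece $R_{m}$ the map $F$ agrees with a M\"obius transformation, so the preimage $F^{-1}$ of a finite union of curves is, inside each $R_{m}$, again a M\"obius preimage of finitely many curves, hence globally a finite union of arcs. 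By induction each $F^{-k}(\partial R)$ is a finite union of arcs, and therefore so is $\mathcal{PD}(F)=\mathcal{PD}_{N}(F)$. Consequently its complement, the Fatou set, has only finitely many connected components.

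Then I would establish forward invariance of the Fatou set. A component $U$ is connected and disjoint from $\partial R$, hence contained in a single $\mathrm{int}(R_{m})$, so $F|_{U}$ is the restriction of one M\"obius map and $F(U)$ is open and connected. If some $w=F(z)\in F(U)$ were to lie in $\mathcal{PD}(F)=\bigcup_{n\geq0}F^{-n}(\partial R)$, then $F^{k}(w)\in\partial R$ for some $k\geq0$, whence $F^{k+1}(z)\in\partial R$ and $z\in\mathcal{PD}(F)$, contradicting $z\in U$. Thus $F(U)$ is contained in a single Fatou component. Writing $V_{n}$ for the component containing $F^{n}(U)$, with $V_{0}=U$, the assignment $V_{n}\mapsto V_{n+1}$ is a self-map of a finite set, so there are $i<j$ with $V_{i}=V_{j}$; then $F^{j-i}(V_{i})\subseteq V_{i}$ shows $V_{i}$ is periodic, and since $F^{i}(U)\subseteq V_{i}$ this makes $U$ eventually periodic.

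The main obstacle I anticipate is the second step: making precise that iterated preimages of $\partial R$ remain a finite union of curves, which is exactly where the piecewise-M\"obius structure and the finiteness of the partition are essential. The hypothesis plays a double role here, guaranteeing both this finiteness and the vanishing of $\alpha(F)$; the latter is what makes the forward-invariance argument clean, since without it an $\alpha$-limit point of $F(U)$ could lie in $\mathcal{PD}(F)\setminus\bigcup_{n}F^{-n}(\partial R)$ and break the contradiction.
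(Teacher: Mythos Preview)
Your proposal is correct and follows essentially the same strategy as the paper: both argue that the hypothesis forces the Fatou set to have only finitely many connected components and then finish with a pigeonhole argument on the induced map on components. The paper phrases the finiteness step in terms of the itinerary coding (finitely many truncated itineraries, hence finitely many cells), whereas you argue directly that $\mathcal{PD}_{N}(F)$ is a finite union of arcs; these are the same observation in different language, and your version supplies the forward-invariance step that the paper takes for granted (it is recorded separately as Proposition~\ref{prop:InvariantSets}).
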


\begin{proof}$\mathcal{PD}(F)=\mathcal{PD}_{N}(F)$ is union of
boundaries of a finite number equivalence classes from itineraries, in consequence the Fatou set is composited by interiors a finite number of equivalence classes.
Therefore, for each component $U$, its orbit
$\left\{ F^{n}(U)\right\} $ in contained in a finite number of components.\end{proof}


\subsection{ A Piecewise conformal map for which the pre-discontinuity set is the entire sphere.\label{subsec:Whole-sphere}}


\begin{exa}Let be $R$ the disc with center in $0$ and radius $1$
and $F$ the piecewise conformal map defined with $F|_{R}(z)=2z$
and $F|_{R^{c}}(z)=\frac{2}{3}z$. We claim that $\mathcal{PD}(F)=\CHat$.

First, we can notice that $F$ can not have periodic points $z\neq0$.
Otherwise, if $F^{n}(z)=z\neq0$ with $n\geq1$, then $F^{n}(z)=2^{i}(\frac{2}{3})^{j}z=z$
and $2^{i+j}=3^{j}$, clearly a contradiction.

Second, we will show that $\big(\bigcup_{n\in\mathbb{N}}F^{-n}(\partial R)\big)\cap[0,\infty)=\bigcup_{n\in\mathbb{N}}A_{n}$,
where \linebreak{}
$A_{n}=\left\{ \frac{1}{2^{n}},\frac{3}{2^{n}},\dots,\frac{3^{n}}{2^{n}}\right\} $.
Let $q=\frac{3^{m}}{2^{n}}$ with $0\leq m\leq n$ (that is, $q\in A_{n}$).
We easily check the following statements:
\begin{enumerate}
\item If $q=1$, then $q\in\partial R$. Also note that $A_{0}=\left\{ 1\right\} $.
\item If $q>1$, then $F(q)=\frac{2}{3}q=\frac{3^{m-1}}{2^{n-1}}$ and clearly
$F(q)\in A_{n-1}\cup\left\{ 1\right\} $.
\item If $q<1$ then $F(q)=2q=\frac{3^{m}}{2^{n-1}}$.
\begin{enumerate}
\item If $m\leq n-1$ then $F(q)\in A_{n-1}\cup\left\{ 1\right\} $.
\item If $m>n-1$, then $n=m$ , but this is impossible because in those
case $\frac{3^{m}}{2^{n}}>1$ and by hypothesis $q<1$.
\end{enumerate}
\end{enumerate}
Since $q$ can not be periodic, exists $N\geq0$ such that $F^{N}(q)=1\in\partial R$,
that is, $q\in \mathcal{PD}_{N}(F)$.

Third, $F|_{[\frac{2}{3},2)}$ is an affine interval exchange transformation:
$F([\frac{2}{3},1))=[\frac{4}{3},2)$ and $F([1,2))=[\frac{2}{3},\frac{4}{3})$.
An affine interval exchange transformation $f:[0,1)\rightarrow[0,1)$
with $f|_{[0,c)}(x)=\lambda x+a$ and $f|_{[c,1)}(x)=\mu x+b$ is
conjugated to the rotation $\tau_{\theta}:S^{1}\rightarrow S^{1}$
of angle $\theta=\frac{\log\lambda}{\log\lambda-\log\mu}$ (see \cite{Liou}).
In our case, $F|_{[\frac{2}{3},2)}$ is conjugated to $f:[0,1)\rightarrow[0,1)$
with $f|_{[0,\frac{1}{4})}(x)=2x+\frac{1}{2}$ and $f|_{(\frac{1}{4},1)}(x)=\frac{2}{3}x-\frac{1}{6}$,
and then conjugated to the rotation of angle $\theta=\frac{\log2}{\log2-\log(2/3)}=\frac{\log2}{\log3}\notin\Q$,
in consequence every orbit of $x\in[\frac{2}{3},2)$ is dense in such
interval.

Let be $x\in(0,\infty)$. Then exists $N\geq0$ such that $F^{N}(x)\in[\frac{2}{3},2)$,
because $F$ is expansive if $x<\frac{2}{3}$ and contractive if $x>2$.
Since the orbit of $F^{N}(x)$ is dense in $[\frac{2}{3},2]$, exists
$M$ such that $F^{N+M}(x)\in(1-\delta_{1},1+\delta_{2}$), where
$\delta_{1}=\frac{\varepsilon}{x+\varepsilon}$ and $\delta_{2}=\frac{\varepsilon}{x-\varepsilon}$
for a given $\varepsilon>0$. 

$F^{N+M}(x)=2^{i}(\frac{2}{3})^{j}x=\frac{2^{i+j}}{3^{j}}x$, where
$i+j=N+M$. Then
\[
\begin{array}{rccclc}
1-\frac{\varepsilon}{x+\varepsilon} & < & \frac{2^{i+j}}{3^{j}}x & < & 1+\frac{\varepsilon}{x-\varepsilon} & \implies\\
\frac{x}{x+\varepsilon} & < & \frac{2^{i+j}}{3^{j}}x & < & \frac{x}{x-\varepsilon} & \implies\\
\frac{1}{x+\varepsilon} & < & \frac{2^{i+j}}{3^{j}} & < & \frac{1}{x-\varepsilon} & \implies\\
x+\varepsilon & > & \frac{3^{j}}{2^{i+j}} & > & x-\varepsilon
\end{array}
\]

That is, for a given $\varepsilon>0$ exists $y\in\bigcup_{n\in\mathbb{N}}A_{n}$
such that $y\in(x-\varepsilon,x+\varepsilon)$. Then, $\mathcal{PD}(F)\cap[0,\infty)=[0,\infty)$.
Since $F$ behaves the same in each ray from origin, we have $\mathcal{PD}(F)=\CHat$.\end{exa}

\subsection{Piecewise Hyperbolic 3D Isometries.\label{subsec:Piecewise-Hyperbolic-3D}}

We want to point out that Poincar\'e showed that any conformal automorphism
of the sphere (a M\"obius transformation), extends to the interior
of the $3$-hyperbolic space, $\HP^{3}$, as an isometry. Hence, given
any piecewise conformal automorphism $F$ on the sphere with regions
$R_{i}$, it extends to a piecewise isometry of the hyperbolic space.
To do so, extend each region $R_{i}$ to a regions $S_{i}$ in $\HP^{3}$,
such that, $S_{i}\cap{\partial}\HP^{3}=R_{i}$ and extend each of
the the corresponding transformations.

Interesting cases comes when the regions $R_{i}$ are restrictions
of discs, since we can consider their natural extensions, the spherical
gaskets. Such spherical gaskets are sent to spherical gaskets
under any M{\"o}bius transformation, and the pre-discontinuity set of the extended
map $F$ will be made of pieces of spherical gaskets over the original
pre-discontinuity set. However, we do not pursue this topic in this article.

\section{Relation to Kleinian Groups\label{sec:Relation-to-Kleinian}}

Given a piecewise conformal map $F$, we can naturally associate the
finitely generated subgroup $\Gamma_{F}=\left\langle F|_{R_{m}}\right\rangle $
of $PSL(2,\C)$, since each $F|_{R_{m}}$ is a M\"obius transformation.
If $\Gamma_{F}$ is discrete, then is called Kleinian group. Such
group also act as a discrete group of isometries in $\mathbb{B}^{3}$, 
the hyperbolic open unit $3$-ball. Recall that the limit set $\Lambda(\Gamma)$
of a Kleinian group $\Gamma$ is the set of accumulation points of
$\Gamma p$, where $p\in\mathbb{B}^{3}$. $\Lambda(\Gamma)\subset\CHat$
because $\Gamma p$ accumulates in $\partial\mathbb{B}^{3}=\mathbb{S}^{2}\cong\CHat$.
The regular set of $\Gamma$ is $\Omega(\Gamma)=\CHat-\Lambda(\Gamma)$.
The limit set $\Lambda(\Gamma)$ can also be characterized as the
limit of sequences of distinct elements $\gamma_{i}\in\Gamma$ applied
to a any element $z\in\Omega(\Gamma)$.

In relation to the associated Kleinian group, we can demonstrate that
if $\partial R$ is contained in the region of regularity of $\Gamma_{F}$,
then the $\alpha$-limit of $F$ lands in the limit set of $\Gamma_{F}$.

\begin{theo}\label{thm:Alpha-Limit}If $\partial R\cap\Lambda(\Gamma_{F})=\emptyset$,
then
\begin{enumerate}
\item $\alpha(F)\subset\Lambda(\Gamma_{F})$, and
\item $\alpha(F)=\underset{n\rightarrow\infty}{\lim}\overline{F^{-n}(\partial R)}$
in $\mathcal{H}(\CHat)$.
\end{enumerate}
\end{theo}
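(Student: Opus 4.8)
The plan is to translate the preimages $F^{-n}(\partial R)$ into the language of the group $\Gamma_{F}$ and then exploit the hypothesis $\partial R\subset\Omega(\Gamma_{F})$ to control the accumulation. Writing $g_{m}=F|_{R_{m}}\in PSL(2,\C)$, if a point $z$ has itinerary $(m_{0},m_{1},\dots)$ then $F^{n}(z)=g_{m_{n-1}}\circ\cdots\circ g_{m_{0}}(z)$, so $z\in F^{-n}(\partial R)$ exactly when $\gamma(z)\in\partial R$ for the length-$n$ word $\gamma=g_{m_{n-1}}\circ\cdots\circ g_{m_{0}}$, equivalently $z\in\gamma^{-1}(\partial R)$. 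Hence
\[
F^{-n}(\partial R)\subset\bigcup_{|w|=n}w^{-1}(\partial R)\subset\bigcup_{\gamma\in\Gamma_{F}}\gamma(\partial R),
\]
each piece $w^{-1}(\partial R)$ being the image of the compact set $\partial R$ under an element of $\Gamma_{F}$. I would first record the consequence that, since $\partial R\subset\Omega(\Gamma_{F})$ and $\Gamma_{F}$ acts properly discontinuously on $\Omega(\Gamma_{F})$, the family $\{\gamma(\partial R)\}_{\gamma\in\Gamma_{F}}$ is locally finite in $\Omega(\Gamma_{F})$ and all of its accumulation points in $\CHat$ lie in $\Lambda(\Gamma_{F})$.

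For part (1), take $z\in\alpha(F)$. Because each finite level $\mathcal{PD}_{N}(F)=\bigcup_{k\le N}F^{-k}(\partial R)$ is closed (a finite union of compact pieces) and $z$ lies in no $F^{-n}(\partial R)$, the point $z$ must be a limit of points $z_{k}\in F^{-n_{k}}(\partial R)$ with $n_{k}\to\infty$. Write $z_{k}=\delta_{k}(p_{k})$ with $p_{k}\in\partial R$ and $\delta_{k}\in\Gamma_{F}$ the inverse word of length $n_{k}$. I would then use the dichotomy coming from discreteness: either the hyperbolic displacements $d_{\mathbb{B}^{3}}(o,\delta_{k}o)$ are unbounded, so a subsequence of the $\delta_{k}$ consists of distinct elements and, applied to the compact set $\partial R\subset\Omega(\Gamma_{F})$, converges locally uniformly to a point of $\Lambda(\Gamma_{F})$, forcing $z\in\Lambda(\Gamma_{F})$; or the $\delta_{k}$ repeat a single element $\delta$, whence $z=\delta(p)\in\Omega(\Gamma_{F})$ is a point of a single piece $\delta(\partial R)$ approached by preimages of unboundedly high level. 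I expect this second alternative to be the main obstacle: it can occur only through relations in $\Gamma_{F}$ that realize $\delta$ by words of many lengths, and I would rule it out using the local finiteness above together with the fact that $z\in\mathcal{PD}(F)$ forces the matching itinerary, so that $F^{n}(z)=\delta^{-1}(z)=p\in\partial R$ would place $z$ in $\bigcup_{n}F^{-n}(\partial R)$, contradicting $z\in\alpha(F)$; in the free (Schottky) situation this is immediate, since distinct word lengths give distinct group elements.

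For part (2), I would work in the compact metric space $\mathcal{H}(\CHat)$ and show that every subsequential Hausdorff limit of $\overline{F^{-n}(\partial R)}$ equals $\alpha(F)$. The upper inclusion $\limsup_{n}\overline{F^{-n}(\partial R)}\subset\alpha(F)$ follows as in part (1): a point accumulated by pieces of level $n_{k}\to\infty$ lies in $\mathcal{PD}(F)$ but in no finite level, hence in $\alpha(F)$. For the lower inclusion $\alpha(F)\subset\liminf_{n}\overline{F^{-n}(\partial R)}$ I would fix $z\in\alpha(F)$ and $\varepsilon>0$ and use the recursive identity $F^{-(n+1)}(\partial R)=F^{-1}(F^{-n}(\partial R))$ together with the invariance of $\alpha(F)$ to propagate an approximating preimage to every sufficiently large level, rather than only infinitely many; proper discontinuity guarantees that the complement of an $\varepsilon$-neighborhood of $\Lambda(\Gamma_{F})$ meets only finitely many pieces $\gamma(\partial R)$, so the bookkeeping terminates. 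The hard part here is precisely this lower bound, namely guaranteeing approximation at \emph{all} large $n$ rather than along a subsequence; once both inclusions hold, compactness of $\mathcal{H}(\CHat)$ upgrades subsequential convergence to genuine convergence, giving $\alpha(F)=\lim_{n}\overline{F^{-n}(\partial R)}$.
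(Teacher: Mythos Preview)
Your proposal has genuine gaps in both parts.

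For part (1), the ``repeating element'' branch of your dichotomy is a real obstacle that your sketch does not close. If infinitely many $\delta_k$ equal a single $\delta$, you conclude $z=\delta(p)$ and then want $F^{n}(z)=\delta^{-1}(z)=p\in\partial R$. But $\delta^{-1}$ is the word matching the itinerary of the \emph{approximant} $z_{k}$, not of $z$; nothing forces the first $n_{k}$ itinerary symbols of $z_{k}$ to agree with those of $z$, so $F^{n_{k}}(z)$ need not equal $\delta^{-1}(z)$. The difficulty is not relations in $\Gamma_{F}$ but that your approximants are chosen independently at each level. The paper avoids this by reversing the logic: it first fixes $z$ in the Hausdorff accumulation set $L=\lim_{n}\overline{F^{-n}(\partial R)}$ and constructs a \emph{coherent backward orbit} $z_{n}\to z$ with $F(z_{n})=z_{n-1}$, choosing the $z_{n}$ along a nested chain of pieces $C_{n,t}$. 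Then $z_{n}=\gamma_{n}(z_{0})$ with $\gamma_{n}=f_{m_{n}}^{-1}\circ\gamma_{n-1}$, and if $\gamma_{i}=\gamma_{j}$ for some $i<j$ one gets $z_{i}=z_{j}$ and the backward orbit becomes periodic, contradicting $z_{n}\to z\in L$. This yields distinct $\gamma_{n}$ applied to $z_{0}\in\partial R\subset\Omega(\Gamma_{F})$, hence $L\subset\Lambda(\Gamma_{F})$; the inclusion $\alpha(F)\subset L$ is then the easy general fact that points of $\alpha(F)$ are accumulated by infinitely many levels.

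For part (2), your upper inclusion ``a point accumulated by pieces of level $n_{k}\to\infty$ lies in no finite level'' is false without the hypothesis (think of a point on $\partial R$ that is periodic under $F$), and this is precisely where $\partial R\cap\Lambda(\Gamma_{F})=\emptyset$ must enter. The paper uses the inclusion $L\subset\Lambda(\Gamma_{F})$ already obtained: if $z\in L$ also lay in some $\mathcal{PD}_{n}(F)$ then $F^{n}(z)=\gamma(z)\in\partial R$ for some $\gamma\in\Gamma_{F}$, while $\gamma(\Lambda(\Gamma_{F}))=\Lambda(\Gamma_{F})$ forces $\gamma(z)\in\partial R\cap\Lambda(\Gamma_{F})=\emptyset$, a contradiction. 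Finally, note that the paper's convergence criterion in $\mathcal{H}(\CHat)$ asks only that every neighborhood of a limit point meet \emph{infinitely many} terms of the sequence, so your worry about the $\liminf$ (approximation at \emph{all} large $n$) is unnecessary; once $\alpha(F)\subset L$ and $L\subset\alpha(F)$ are established, part (2) follows.
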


\begin{note} $\mathcal{H}(\CHat)$ is the space of compact subsets
of $\CHat$ with the Hausdorff topology induced from a spherical
metric in $\CHat$, see \prettyref{sec:Technical-lemmas}.\end{note}

\begin{proof}Let $L=\underset{n\rightarrow\infty}{\lim}\overline{F^{-n}(\partial R)}$.
If $L=\emptyset$, then $\alpha(F)=\emptyset$, by Proposition \prettyref{prop:AlphaLimit1}.

Suppose $L\neq\emptyset$ and let $z\in L$, $f_{m}=F|_{R_{m}}$ and
$\Sigma_{M}(k)$ the set of words of $k$ length of $M$ symbols.
Note that 
\[
\overline{F^{-1}(\partial R)}=\bigcup_{m=1}^{M}\big(f_{m}^{-1}(\partial R)\big)\cap\overline{R_{m}}=\overline{C_{1,1}}\cup\dots\cup\overline{C_{1,M}}=\bigcup_{t\in\Sigma_{M}(1)}\overline{C_{1,t}}
\]
and consequently $\overline{F^{-n}(\partial R)}=\bigcup_{t\in\Sigma_{M}(n)}\overline{C_{n,t}}$,
where each $C_{n,t}$ is a finite union of curve segments and points,
or an empty set (see last remark in \prettyref{sec:Technical-lemmas}).

At least one $C_{n,t}$ is not empty for each $n$ level, because
we assumed $L\neq\emptyset$. Then we can take $z_{n}\in C_{n,t}\subset\overline{F^{-n}(\partial R)}$
such that $z_{n}\rightarrow z$, because every neighborhood of $z$
intersects infinitely many $\overline{F^{-n}(\partial R)}$. Even
more, we can choose $z_{n}\in C_{n,t}$ such that $F(z_{n})=z_{n-1}$,
because $C_{n,t}=C_{n,sm}=f_{m}^{-1}(C_{n-1,s})\cap R_{m}$ for some
$m$ and then $F(C_{n,t})\subset C_{n-1,s}$. By this construction,
$z_{n}=f_{m_{n}}^{-1}\circ\dots\circ f_{m_{1}}^{-1}(z_{0})=\gamma_{n}(z_{0})$,
with $z_{0}\in\bigcap\overline{F^{n}(C_{n,t})}\subset\partial R$
and $\gamma_{n}=f_{m_{n}}^{-1}\circ\dots\circ f_{m_{1}}^{-1}\in\Gamma_{F}$.

Suppose that $\gamma_{i}=\gamma_{j}$ for some $i<j$. By construction,
$\gamma_{j}=f_{m_{j}}^{-1}\circ\dots\circ f_{m_{i+1}}^{-1}\circ f_{m_{i}}^{-1}\circ\dots\circ f_{m_{1}}^{-1}=f_{m_{j}}^{-1}\circ\dots\circ f_{m_{i+1}}^{-1}\circ\gamma_{i}$.
In other side, $z_{i}=\gamma_{i}(z_{0})=\gamma_{j}(z_{0})=z_{j}$.
Then 
\[
\begin{array}{lclcl}
z_{j+1} & = & f_{m_{i+1}}^{-1}\circ\gamma_{i}(z_{0}) & = & z_{i+1},\\
z_{j+2} & = & f_{m_{i+2}}^{-1}(z_{j+1}) & = & z_{i+2},\\
\dots\\
z_{j+j-i} & = & f_{m_{j}}^{-1}(z_{j-1}) & = & z_{j}=z_{i}.
\end{array}
\]
Therefore, the sequence $z_{0},z_{1},\dots,z_{i},\dots,z_{j},z_{i},\dots,z_{j},\dots\nrightarrow z\in L$,
contradicting the hypothesis.

In conclusion, the sequence $\gamma_{n}(z_{0})$ converging to $z\in L$
is constructed with distinct elements $\gamma_{n}\in\Gamma_{F}$ applied
to $z_{0}\in\bigcap_{n\geq0}\overline{F^{n}(C_{n,t})}\subset\partial R\subset\Omega(\Gamma_{F})$.
Notice that $\bigcap_{n\geq0}\overline{F^{n}(C_{n,t})}\neq\emptyset$
because $\overline{F^{n}(C_{n,t})}$ is a sequence of nested closed
sets. Finally, using Proposition \prettyref{prop:AlphaLimit1}, $\alpha(F)\subset L\subset\Lambda(\Gamma_{F})$.

Now let $z\in L$ but $z\notin\alpha(F)$. Then $z\in\overline{F^{-n}(\partial R)}\subset \mathcal{PD}_{n}(F)$
for some $n$. In consequence, $F^n(z)=\gamma(z)\in\partial R$ for
some $\gamma\in\Gamma_{F}$. Therefore $\emptyset\neq\left(\gamma\Lambda(\Gamma_{F})\right)\cap\partial R\subset\Lambda(\Gamma_{F})\cap\partial R$,
contradicting the hypothesis.\end{proof}

Recall that the $\omega$-limit of a point $z_{0}$ under the map
$F$ is the set of accumulation points of the orbit $\left\{ F^{n}(z_{0})\right\} $
and is denoted by $\omega(z_{0},F)$. Regarding the limit set of the
Kleinian group, we have the next

\begin{theo}\label{thm:Omega-Limit}$\omega(z,F)\subset\Lambda(\Gamma_{F})$
for all $z\in\CHat$.\end{theo}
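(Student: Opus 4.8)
The plan is to exploit the fact, already used in the proof of \prettyref{thm:Alpha-Limit}, that every forward iterate is realized by a single group element. Writing $f_m=F|_{R_m}$ and recording the itinerary $m_1,m_2,\dots$ of $z$, one has $F^n(z)=f_{m_n}\circ\cdots\circ f_{m_1}(z)=\gamma_n(z)$ with $\gamma_n\in\Gamma_F$. Hence the forward orbit $\{F^n(z)\}$ is contained in the group orbit $\Gamma_F z$, and since the derived set is monotone, $\omega(z,F)$ is contained in the set of accumulation points of $\Gamma_F z$. The strategy is therefore to show that every accumulation point of $\{\gamma_n(z)\}$ lies in $\Lambda(\Gamma_F)$, splitting according to whether $z$ lies in the limit set or in the regular set.

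First I would fix $w\in\omega(z,F)$ and use that $w$ is an accumulation point of the orbit \emph{set} $\{F^n(z)\}$ to extract a sequence of pairwise distinct orbit points $\gamma_{n_k}(z)\to w$. The key elementary observation is that distinct values force distinct group elements: if $\gamma_{n_k}=\gamma_{n_\ell}$ then $\gamma_{n_k}(z)=\gamma_{n_\ell}(z)$, so the selected $\gamma_{n_k}$ are pairwise distinct elements of $\Gamma_F$. (If the orbit is finite it has no accumulation points and $\omega(z,F)=\emptyset$, so that situation is vacuous.)

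Next I would split into two cases. If $z\in\Lambda(\Gamma_F)$, then since $\Lambda(\Gamma_F)$ is closed and $\Gamma_F$-invariant, the entire orbit $\{\gamma_n(z)\}$ lies in $\Lambda(\Gamma_F)$, and therefore so does its limit $w$. If instead $z\in\Omega(\Gamma_F)$, then $\gamma_{n_k}(z)\to w$ is exactly the limit of a sequence of distinct group elements applied to a point of the regular set, whence $w\in\Lambda(\Gamma_F)$ by the characterization of the limit set recalled at the start of this section. In both cases $w\in\Lambda(\Gamma_F)$, establishing the inclusion.

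The step I expect to be the main obstacle—and the place where the set-theoretic reading of ``accumulation point'' is essential—is guaranteeing that the convergent subsequence consists of distinct group elements. This is what rules out the spurious scenario of an eventually periodic orbit landing repeatedly on a fixed point of an elliptic element of $\Gamma_F$, a point that would sit in $\Omega(\Gamma_F)$ rather than in $\Lambda(\Gamma_F)$; under the accumulation-point-of-a-set convention such finite orbits contribute nothing to $\omega(z,F)$. Should one prefer a self-contained argument in place of citing the characterization, the same conclusion follows from the fundamental convergence property of discrete subgroups of $PSL(2,\C)$: a subsequence of the distinct $\gamma_{n_k}$ converges locally uniformly on $\CHat\setminus\{q\}$ to a constant $p\in\Lambda(\Gamma_F)$ with $q\in\Lambda(\Gamma_F)$, which forces $w=p$ when $z\neq q$ and reduces to the invariant case when $z=q$.
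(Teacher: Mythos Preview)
Your argument is correct and follows exactly the same line as the paper's own proof: write $F^n(z)=\gamma_n(z)$ with $\gamma_n\in\Gamma_F$, so that $\{F^n(z)\}\subset\Gamma_F z$, and conclude that $\omega(z,F)$ lies in the accumulation set of the group orbit, hence in $\Lambda(\Gamma_F)$. The paper's proof is a two-line version of yours; your case split $z\in\Lambda(\Gamma_F)$ versus $z\in\Omega(\Gamma_F)$ and your care in extracting pairwise distinct $\gamma_{n_k}$ make explicit the details that the paper leaves implicit.
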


\begin{proof}$\omega(z,F)$ is the set of accumulation points of
the orbit $\left\{ F^{n}(z)\right\} $. But \linebreak{}
$F^{n}=\gamma_{n}\in\Gamma_{F}$, then $\left\{ F^{n}(z)\right\} \subset\Gamma_{F}z$.
In the other hand, $\Lambda(\Gamma_{F})$ is the set of accumulation
points of $\Gamma_{F}z$, then $\omega(z,F)\subset\Lambda(\Gamma_{F})$.\end{proof}

\section{Deformations and Stability\label{sec:Deformations-and-Stability}}

The parameter space of piecewise conformal maps depends on the maps
\linebreak{}
$F\vert_{R_{m}}=\gamma_{m}\in PSL(2,\C)$ and the elements $R_{m}$
of the partition in the sphere.


In our case it is enough to consider the discontinuity set $\partial R=\bigcup_{m=1}^{M}{\partial}R_{m}$
as a compact subset of $\CHat$. So if the partition runs from
$m=1$ to $m=M$, the parameter space is a subspace of
\[
\mathcal{T}=\overbrace{PSL(2,\C)\times\dots\times PSL(2,\C)}^{M\,\mathrm{times}}\times\mathcal{H}(\CHat)
\]

with the product topology, where $\mathcal{H}(\CHat)$ is as in the Note on section 3.

We can ask about stability of $F$ through deformations in the parameter
space $\mathcal{T}$. First we will consider deformations of $\partial R$
and later on deformations of the coefficients on the component functions
$\gamma_{m}$ of $F$.

In order to be clear, along this section we consider $F$ to be defined
in only two simply connected regions $R_{1}$ and $R_{2}$, being
$\partial R$ one simple closed curve. Let $f=F|_{R_{1}}$ and $g=F|_{R_{2}}$.
To begin, let us fix $f$ and $g$, and perturb $\partial R$ to obtain
$\partial R'$. Now, $\partial R'$ bounds two regions homeomorphic
to discs $R'_{1}$ and $R'_{2}$ and we define $F'$ by $F'|_{R'_{1}}=f$,
$F'|_{R'_{2}}=g$. Notice $\Gamma_{F}=\Gamma_{F'}=\left\langle f,g\right\rangle $,
because $f$ and $g$ have been fixed.

Let $\mathcal{C}(\CHat)$ the subspace of $\mathcal{H}(\CHat)$
consisting of all compact subsets of the sphere homeomorphic to a
circle. Observe that for each $n\in{\N}$ and $f,g\in PSL(2,\C)$,
there are a natural maps $\Psi_{f,g,n}:\mathcal{C}(\CHat)\mapsto\mathcal{H}(\CHat)$
that assigns to $\partial R$ the \emph{n}th-level pre-discontinuity set $\mathcal{PD}_{n}(F)$
of the piecewise map $F$, and let us denote by $\Psi_{f,g}$ the
map from $\partial R$ to the pre-discontinuity set $\mathcal{PD}(F)$.

\begin{theo}\label{thm:ContDefSpidN}For a fixed pair $f,g$ in $PSL(2,\C)$,
the map $\Psi_{f,g,n}$ is continuous, for each $n\in{\N}$.\end{theo}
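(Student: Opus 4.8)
The plan is to induct on $n$. For the base case $n=0$ the map $\Psi_{f,g,0}$ is the inclusion $\mathcal{C}(\CHat)\hookrightarrow\mathcal{H}(\CHat)$, which is an isometry for the Hausdorff metric and hence continuous. For the inductive step I would use the combinatorial description of the pre-discontinuity set from \prettyref{sec:Technical-lemmas} (the two-region instance of the identities used in the proof of \prettyref{thm:Alpha-Limit}), which yields the recursion
\[
\mathcal{PD}_{n}(F)=\partial R\cup\big(f^{-1}(\mathcal{PD}_{n-1}(F))\cap\overline{R_{1}}\big)\cup\big(g^{-1}(\mathcal{PD}_{n-1}(F))\cap\overline{R_{2}}\big).
\]
Writing $R_{1}(C),R_{2}(C)$ for the two Jordan domains cut out by a curve $C$ (labelled by proximity to the fixed domains $R_{1},R_{2}$, which is unambiguous for small perturbations), this exhibits $\Psi_{f,g,n}(\partial R)=\mathcal{G}\big(\partial R,\Psi_{f,g,n-1}(\partial R)\big)$ with
\[
\mathcal{G}(C,A)=C\cup\big(f^{-1}(A)\cap\overline{R_{1}(C)}\big)\cup\big(g^{-1}(A)\cap\overline{R_{2}(C)}\big).
\]
Since $C\mapsto\big(C,\Psi_{f,g,n-1}(C)\big)$ is continuous by the inductive hypothesis, the theorem reduces to the joint continuity of the operator $\mathcal{G}$.

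The elementary ingredients of $\mathcal{G}$ are harmless. A fixed $h\in PSL(2,\C)$ is a homeomorphism of the compact sphere, hence uniformly continuous for the spherical metric, and a uniformly continuous self-map induces a Hausdorff-continuous map $A\mapsto h(A)$ on $\mathcal{H}(\CHat)$; thus $A\mapsto f^{-1}(A)$ and $A\mapsto g^{-1}(A)$ are continuous. Finite unions are $1$-Lipschitz for the Hausdorff metric. The one geometric input I would isolate as a lemma is that $C\mapsto\big(\overline{R_{1}(C)},\overline{R_{2}(C)}\big)$ is continuous: this follows from the Jordan curve theorem together with the fact that if $d_{H}(C,C')$ is small then the two families of closed regions can disagree only inside a thin neighborhood of $C$, because a point lying at definite distance from $C$ falls in corresponding components of the complements of $C$ and of $C'$.

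The crux, and the step I expect to be the main obstacle, is the behaviour of the intersections, since intersection is \emph{not} continuous for the Hausdorff metric: two sets meeting only at a point can be separated by an arbitrarily small perturbation, making the intersection collapse. I would treat the two semicontinuities separately. Upper semicontinuity always holds: if $x_{k}\in f^{-1}(A_{k})\cap\overline{R_{1}(C_{k})}$ and $x_{k}\to x$, then $x\in f^{-1}(A)$ (because $f^{-1}(A_{k})\to f^{-1}(A)$) and $x\in\overline{R_{1}(C)}$ (by the region lemma), so no limit point escapes $\mathcal{G}(C,A)$. Lower semicontinuity can fail only at points where $f^{-1}(A)$ meets $\overline{R_{1}(C)}$ along its boundary, that is at points of $f^{-1}(A)\cap\partial R_{1}(C)=f^{-1}(A)\cap C$, and likewise for $g$. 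The decisive observation is that all such points lie on $C$, and $C\subset\mathcal{G}(C,A)$: since $C_{k}\to C$, points of $C_{k}\subset\mathcal{G}(C_{k},A_{k})$ approximate them, so the union with $C$ restores lower semicontinuity. Every interior point $x\in f^{-1}(A)\cap R_{1}(C)$ is unproblematic, because the region lemma places nearby points of $f^{-1}(A_{k})$ inside $R_{1}(C_{k})$ for large $k$. Hence $\mathcal{G}$ is jointly continuous and the induction closes.

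In short, the only two points demanding real care are the region lemma and the boundary-absorption argument that repairs the generic discontinuity of intersection; everything else is the formal calculus of Hausdorff-continuous operations. I would expect the absorption step, namely recognizing that the intersection's lower-semicontinuity failures are confined to $\partial R$, which is already a summand of the pre-discontinuity set, to be the conceptual heart of the proof.
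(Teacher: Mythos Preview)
Your proposal is correct and follows essentially the same route as the paper. The paper also inducts on $n$, invokes the region lemma (its Lemma~\ref{lem:HausConvJordanCurve}) and the Hausdorff-continuity of M\"obius images and finite unions (Lemmas~\ref{lem:HausConvHom} and~\ref{lem:HausConvUnion}), and handles the intersection via a lemma stating that $A_k\cap B_k\to A\cap B-Y$ with the exceptional set $Y$ contained in $\partial(A\cap B)$; the paper then observes, exactly as you do, that these failure points lie on $\partial R$ and are therefore absorbed by the union with $\partial R$.
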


\begin{proof}We prove continuity of $\Psi_{f,g,n}$ using the sequence
convergence criterion. Let be $C_{k}\in\mathcal{C}(\CHat)$ a sequence
convergent to $\partial R$. Let be $D_{k}$ and $D$ the closure
of interior sets of $C_{k}$ and $\partial R$, respectively, and
$E_{k}$ and $E$ the closure of exterior sets of $C_{k}$ and $\partial R$,
respectively. Particularly, we have $D=\overline{R_{1}}$ and $E=\overline{R_{2}}$.
Because of \prettyref{lem:HausConvJordanCurve} (see \prettyref{sec:Technical-lemmas}),
$D_{k}\rightarrow D$ and $E_{k}\rightarrow E$.

Using \prettyref{lem:HausConvHom}, we have $f^{-1}(C_{k})\rightarrow f^{-1}(\partial R)$
and $g^{-1}(C_{k})\rightarrow g^{-1}(\partial R)$, because $f$ and
$g$ are M\"obius transformations.

Because of \prettyref{lem:HausConvInter}
\[
\begin{array}{ccc}
f^{-1}(C_{k})\,\cap\,D_{k} & \rightarrow & f^{-1}(\partial R)\,\cap\,D\,-\,Y_{f}\end{array}
\]
and
\[
\begin{array}{ccc}
g^{-1}(C_{k})\,\cap\,E_{k} & \rightarrow & g^{-1}(\partial R)\,\cap\,E\,-\,Y_{g}\end{array}
\]
where $Y_{f}$ and $Y_{g}$ are the respective isolated points sets.

Using \prettyref{lem:HausConvUnion} we have
\[
\begin{array}{cc}
\big(f^{-1}(C_{k})\,\cap\,D_{k}\big)\,\cup\,\big(g^{-1}(C_{k})\,\cap\,E_{k}\big)\,\cup\,C_{k} & \rightarrow\\
\big(f^{-1}(\partial R)\,\cap\,D\,-\,Y_{f}\big)\,\cup\,\big(g^{-1}(\partial R)\,\cap\,E\,-\,Y_{g}\big)\,\cup\,\partial R
\end{array}
\]

Let be $F_{k}$ piecewise transformations such that $F_{k}|_{D_{k}}=f$
and $F_{k}|_{D_{k}^{c}}=g$. Then 
\[
\big(f^{-1}(C_{k})\cap D_{k}\big)\cup\big(g^{-1}(C_{k})\cap E_{k}\big)\cup C_{k}=F^{-1}(C_{k})\cup C_{k}=\mathcal{PD}_{1}(F_{k}).
\]

In the other hand 
\[
\begin{array}{cc}
\big(f^{-1}(\partial R)\cap D-Y_{f}\big)\cup\big(g^{-1}(\partial R)\cap E-Y_{g}\big)\cup\partial R & =\\
\big(f^{-1}(\partial R)\cap D\big)\cup\big(g^{-1}(\partial R)\cap E\big)\cup\partial R & =\\
F^{-1}(\partial R)\cup\partial R & =\\
\mathcal{PD}_{1}(F),
\end{array}
\]
because $Y_{f},\,Y_{g}\subset\partial R$.

Finally, we have shown that $\mathcal{PD}_{1}(F_{k})\rightarrow \mathcal{PD}_{1}(F)$.

Now suppose that $\mathcal{PD}_{n-1}(F_{k})\rightarrow \mathcal{PD}_{n-1}(F)$. Then,
with an analogous argument to previous one using Lemmas \ref{lem:HausConvHom},
\ref{lem:HausConvInter} and \ref{lem:HausConvUnion}, but with $\mathcal{PD}_{n-1}(F_{k})$
instead of $C_{k}$, we can demonstrate that 
\[
F^{-1}\big(\mathcal{PD}_{n-1}(F_{k})\big)\cup C_{k}\rightarrow F^{-1}\big(\mathcal{PD}_{n-1}(F)\big)\cup\partial R
\]
But, by \prettyref{lem:F-1Spid}, we have that $F^{-1}\big(\mathcal{PD}_{n-1}(F_{k})\big)\cup C_{k}=\mathcal{PD}_{n}(F_{k})$
and\linebreak{}
 $F^{-1}\big(\mathcal{PD}_{n-1}(F)\big)\cup\partial R=$$\mathcal{PD}_{n}(F)$.\end{proof}

Even more

\begin{theo}\label{thm:ContDefSpid}For a fixed pair $f,g$ in $PSL(2,\C)$,
if $\partial R\cap\Lambda(\Gamma_{F})=\emptyset$, then the map $\Psi_{f,g}$
is continuous.\end{theo}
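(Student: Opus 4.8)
The plan is to realize $\Psi_{f,g}$ as the limit of the finite-level maps $\Psi_{f,g,n}$ of Theorem~\ref{thm:ContDefSpidN} and to control the convergence uniformly over the perturbation. Two preliminary observations drive everything. First, since $f$ and $g$ are fixed, the group $\Gamma_{F_{k}}=\langle f,g\rangle=\Gamma_{F}$ does \emph{not} depend on the perturbation, so $\Lambda(\Gamma_{F})$ and $\Omega(\Gamma_{F})$ are the same for every $F_{k}$. Second, for any piecewise map $\{\mathcal{PD}_{n}\}$ is an increasing family of compacta with $\mathcal{PD}(F)=\overline{\bigcup_{n}\mathcal{PD}_{n}(F)}$, and an increasing sequence of compacta converges in $\mathcal{H}(\CHat)$ to the closure of its union; hence $\mathcal{PD}_{n}(F)\to\mathcal{PD}(F)$ and $\mathcal{PD}_{n}(F_{k})\to\mathcal{PD}(F_{k})$ as $n\to\infty$. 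Writing $C_{k}\to\partial R$ in $\mathcal{C}(\CHat)$, Theorem~\ref{thm:ContDefSpidN} gives $\mathcal{PD}_{n}(F_{k})\to\mathcal{PD}_{n}(F)$ as $k\to\infty$ for each fixed $n$. Continuity of $\Psi_{f,g}$ is exactly the statement that these two limits may be interchanged, and by the triangle inequality for the Hausdorff distance $d_{H}$ this reduces to a single uniform estimate: for every $\varepsilon>0$ there exist $N$ and $k_{0}$ with $d_{H}(\mathcal{PD}_{N}(F_{k}),\mathcal{PD}(F_{k}))<\varepsilon$ for all $k\ge k_{0}$, together with the same estimate for $F$ itself.

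Next I would establish the tail estimate for a single map directly from Theorem~\ref{thm:Alpha-Limit}, which applies because the hypothesis $\partial R\cap\Lambda(\Gamma_{F})=\emptyset$ is precisely $\partial R\subset\Omega(\Gamma_{F})$. By part~(2), $\overline{F^{-m}(\partial R)}\to\alpha(F)$ in $\mathcal{H}(\CHat)$, so for a given $\varepsilon$ there is $N$ with $\alpha(F)\subset N_{\varepsilon}\big(\overline{F^{-N}(\partial R)}\big)\subset N_{\varepsilon}(\mathcal{PD}_{N}(F))$ and $\overline{F^{-m}(\partial R)}\subset N_{\varepsilon}(\alpha(F))$ for all $m>N$, where $N_{\varepsilon}(\cdot)$ denotes the spherical $\varepsilon$-neighborhood. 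Taking the union over $m>N$ and then the closure gives $\overline{\bigcup_{m>N}F^{-m}(\partial R)}\subset N_{2\varepsilon}(\mathcal{PD}_{N}(F))$; since closure distributes over the finite union, $\mathcal{PD}(F)=\mathcal{PD}_{N}(F)\cup\overline{\bigcup_{m>N}F^{-m}(\partial R)}\subset N_{2\varepsilon}(\mathcal{PD}_{N}(F))$, whence $d_{H}(\mathcal{PD}_{N}(F),\mathcal{PD}(F))\le 2\varepsilon$. The identical argument applies to each $F_{k}$ (legitimately, since $C_{k}\subset\Omega(\Gamma_{F})$ for large $k$), but a priori with an $N=N(k)$ coming from the convergence rate in Theorem~\ref{thm:Alpha-Limit} for $F_{k}$.

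The main obstacle is therefore to make this rate uniform in $k$, i.e.\ to choose one $N$ valid for all large $k$. Here the two preliminary observations combine. As $\partial R$ is compact and disjoint from the closed set $\Lambda(\Gamma_{F})$, we have $\delta_{0}:=d(\partial R,\Lambda(\Gamma_{F}))>0$, and since $C_{k}\to\partial R$ every $C_{k}$ with $k\ge k_{0}$ lies in the \emph{fixed} compact set $K'=\overline{N_{\delta_{0}/2}(\partial R)}\subset\Omega(\Gamma_{F})$. The high-level pieces of $\mathcal{PD}(F_{k})$ are, exactly as in the proof of Theorem~\ref{thm:Alpha-Limit}, images $\gamma^{-1}(\text{sub-arc of }C_{k})$ with $\gamma^{-1}\in\Gamma_{F}$ a word of length $m$ in $f^{-1},g^{-1}$ and the sub-arc contained in $C_{k}\subset K'$. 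Proper discontinuity of $\Gamma_{F}$ on $\Omega(\Gamma_{F})$ yields that $\{\gamma\in\Gamma_{F}:\operatorname{diam}(\gamma K')\ge\varepsilon\}$ is finite and that $\gamma K'\to\Lambda(\Gamma_{F})$ off this finite set; because $K'$ is a single compact set independent of $k$, this furnishes an $N$ depending only on $\varepsilon$, $K'$ and $\Gamma_{F}$ beyond which every level-$m$ piece ($m>N$) has diameter $<\varepsilon$ and lies in $N_{\varepsilon}(\Lambda)$, uniformly in $k$. Feeding this uniform $N$ into the single-map argument of the previous paragraph produces the required uniform tail estimate and closes the proof. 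The delicate point to handle with care is that a long word need not represent a \emph{large} group element when $\Gamma_{F}$ has relations or torsion; this must be controlled through the discreteness of $\Gamma_{F}$ (so that any elliptic generators have finite order) and the admissibility constraints on itineraries inherited from the partition, and it is exactly this word-length-versus-group-displacement issue that is the heart of the argument.
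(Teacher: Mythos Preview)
Your route is genuinely different from the paper's, and the gap you yourself flag is real and not closed.

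\textbf{How the paper argues.} The paper does not attempt a uniform-in-$k$ tail estimate at all. It works directly with the neighborhood characterization of Hausdorff convergence. For one direction it shows that every neighborhood of a point of $\mathcal{PD}(F)$ meets infinitely many $\mathcal{PD}(F_k)$, handling $z\in\mathcal{PD}_n(F)$ via Theorem~\ref{thm:ContDefSpidN} and $z\in\alpha(F)$ by a diagonal argument. For the converse direction it splits on whether the putative limit point $z$ lies in $\Omega(\Gamma_F)$ or in $\Lambda(\Gamma_F)$; in the first case one extracts a fixed level $n$ at which $\mathcal{N}_z$ meets infinitely many $\mathcal{PD}_n(F_k)$ and invokes Theorem~\ref{thm:ContDefSpidN}, and in the second case one produces an unbounded sequence of levels forcing $z\in\alpha(F)$ via Theorem~\ref{thm:Alpha-Limit}. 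No statement of the form ``$d_H(\mathcal{PD}_N(F_k),\mathcal{PD}(F_k))<\varepsilon$ for all large $k$'' is ever needed.

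\textbf{Where your argument breaks.} Your uniform tail estimate rests on the claim that once $m>N$ every level-$m$ piece $C^{(k)}_{m,t}$ is small and sits in $N_\varepsilon(\Lambda)$. Proper discontinuity indeed gives that only finitely many \emph{group elements} $\gamma\in\Gamma_F$ satisfy $\operatorname{diam}(\gamma K')\ge\varepsilon$, but a level-$m$ piece is governed by a \emph{word} of length $m$, and when $\Gamma_F$ is not free on $\{f,g\}$ arbitrarily long admissible words can represent one of those finitely many bad elements. You name this as ``the heart of the argument'' but do not supply it; the hints you give (finite-order elliptics, itinerary constraints) do not obviously suffice. For a concrete obstruction, take $g=\mathrm{id}$: then $g^n$ is a perfectly admissible itinerary prefix for any point of $R_2$, has arbitrary word length, and represents the identity, so level-$m$ pieces need not shrink with $m$ at all.

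There is a second, unflagged gap. Even granting that tails lie uniformly in $N_\varepsilon(\Lambda)$, your single-map argument chains this through $\alpha(F_k)$ to reach $\mathcal{PD}_N(F_k)$: you need $\alpha(F_k)\subset N_\varepsilon\big(\overline{F_k^{-N}(C_k)}\big)$ uniformly in $k$. Your proper-discontinuity step says nothing about this direction, and since in general $\alpha(F_k)\subsetneq\Lambda$, having the tails $\varepsilon$-close to $\Lambda$ does not place them $\varepsilon$-close to $\alpha(F_k)$, let alone to $\mathcal{PD}_N(F_k)$.

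The paper's pointwise $\Omega$/$\Lambda$ dichotomy sidesteps both issues because it never asks for a single $N$ that works for all $k$ simultaneously; it only needs, for each individual $z$, either one fixed level or an escaping sequence of levels, and that is extracted from Theorem~\ref{thm:Alpha-Limit} applied separately to $F$ and to each $F_k$.
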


\begin{proof}As in the previous proof, let be $C_{k}\in\mathcal{C}(\CHat)$
a sequence convergent to $\partial R$ and $F_{k}$ the piecewise
conformal map defined by $f$ in $D_{k}$ the interior of $C_{k}$
and $g$ in $E_{k}$ the exterior of $C_{k}$. Also we assume that
$C_{k}\cap\Lambda(\Gamma_{F})=\emptyset$ for all $k$.

Let $z\in \mathcal{PD}(F)$. If $z\in \mathcal{PD}_{n}(F)$ for some $n$, then every
neighborhood of $z$, denoted $\mathcal{N}_{z}$, intersects infinitely
many $\mathcal{PD}(F_{k})$ because $\mathcal{PD}_{n}(F_{k})\rightarrow \mathcal{PD}_{n}(F)$.

If $z\in\alpha(F)$, every neighborhood $\mathcal{N}_{z}$ intersects
infinitely many $\overline{F^{-n}(\partial R)}$. For each $w\in\overline{F^{-n}(\partial R)}$
every neighborhood $\mathcal{N}_{w}\subset\mathcal{N}_{z}$ intersects
infinitely many $\mathcal{PD}_{n}(F_{k})$. Then, $\mathcal{N}_{z}$ intersects
infinitely many $\mathcal{PD}(F_{k})$.

Now let $z\in\CHat$ such that every neighborhood $\mathcal{N}_{z}$
intersects infinitely many $\mathcal{PD}(F_{k})$. If $z\in\Omega(\Gamma_{F})$,
$\mathcal{N}_{z}$ intersects infinitely many $\mathcal{PD}_{n}(F_{k})$ for
some fixed $n$, because $\mathcal{PD}_{n}(F_{k})\subset\Omega(\Gamma_{F})$
for all $n$ and $\underset{n\rightarrow\infty}{\lim}\overline{F^{-n}(\partial R)}\subset\Lambda(\Gamma_{F})$
by \prettyref{thm:Alpha-Limit}. Then, $z\in \mathcal{PD}_{n}(F)\subset \mathcal{PD}(F)$.

If $z\in\Lambda(\Gamma_{F})$, $\mathcal{N}_{z}$ can not intersect
infinitely many $\mathcal{PD}_{n}(F_{k})$ for some fixed $n$, because that
implies $z\in \mathcal{PD}_{n}(F)\subset\Omega(\Gamma_{F})$. Then, \textbf{$\mathcal{N}_{z}$}
must intersect sets $\overline{F_{k}^{-n_{k}}(\partial R)}\subset \mathcal{PD}(F_{k})$
with an increasing sequence $n_{k}$. As $\mathcal{PD}_{n_{k}}(F_{k})\rightarrow \mathcal{PD}_{n_{k}}(F)$
for each $n_{k}$, $\mathcal{N}_{z}$ intersects infinitely many $\overline{F^{-n_{k}}(\partial R)}$,
and we conclude that\linebreak{}
 $z\in\underset{n\rightarrow\infty}{\lim}\overline{F^{-n}(\partial R)}=\alpha(F)$
using \prettyref{thm:Alpha-Limit}.\end{proof}

In this way, if $\partial R\cap\Lambda(\Gamma_{F})=\emptyset$, we
ensure certain stability of $F$ (fixing $f$ and $g$) through continuous
deformations of $\partial R$ (see figure \ref{fig:spidstable}).
In the opposite case, if $\partial R\cap\Lambda(\Gamma_{F})\neq\emptyset$,
we found unrelated dynamics of $F$ and $F'$, because the unstability
of $\Lambda(\Gamma_{F})$ carried by $F^{-n}(\partial R$) along $\mathcal{PD}(F)$
(see figure \ref{fig:spidunstable}).

Let be $X_{\partial R}=\left\{ F:\CHat\rightarrow\CHat:\,F|_{R_{1}},\,F|_{R_{2}}\in PSL(2,\C)\right\} $
the space of piecewise conformal transformations defined by a fix
$\partial R$, a simple closed curve. This space is homeomorphic to
$PSL(2,\C)\times PSL(2,\C)$. 

A piecewise conformal transformation $F\in X_{\partial R}$ is \textbf{structurally
stable} if exists $\mathcal{N}_{F}$ neighborhood of $F$ in $X_{\partial R}$
such that for all $F'\in\mathcal{N}_{F}$ exists $h:\CHat\rightarrow\CHat$
homeomorphism with $F'\circ h=h\circ F$.

The following result can ensure structural stability on the particular
class of transformations which have a classic or non-classic Schottky group as associated
Kleinian group.

\begin{theo}\label{thm:StrucStabSchottky}If $\Gamma_{F}$ is a Schottky
group and $\partial R$ is contained in a fundamental region of $\Gamma_{F}$,
then $F$ is structurally stable in $X_{\partial R}$.\end{theo}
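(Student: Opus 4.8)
The plan is to establish structural stability by constructing, for each nearby $F'$, a conjugating homeomorphism $h$ built from the conjugacy between the Schottky groups $\Gamma_F$ and $\Gamma_{F'}$ together with the stability of the pre-discontinuity set proved in \prettyref{thm:ContDefSpid}. The key observation is that when $\Gamma_F$ is a Schottky group and $\partial R$ lies inside a fundamental region, the hypothesis $\partial R\cap\Lambda(\Gamma_F)=\emptyset$ of \prettyref{thm:Alpha-Limit} and \prettyref{thm:ContDefSpid} is automatically satisfied, since the limit set of a Schottky group is contained in the union of the bounding circles and their interiors, away from the interior of any fundamental domain. This gives us immediate access to the continuity of $\Psi_{f,g}$ and to the identification $\alpha(F)\subset\Lambda(\Gamma_F)$.

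First I would recall that Schottky groups are structurally stable as Kleinian groups: for $F'\in\mathcal{N}_F$, the generators $f',g'$ of $\Gamma_{F'}$ are close to $f,g$, so $\Gamma_{F'}$ remains a Schottky group (the defining configuration of disjoint Jordan curves is an open condition), and there is a homeomorphism $h_0:\Lambda(\Gamma_F)\to\Lambda(\Gamma_{F'})$ equivariant with respect to the isomorphism $\Gamma_F\to\Gamma_{F'}$ sending $f\mapsto f'$, $g\mapsto g'$. This is the classical structural stability of Schottky groups via the Markov partition / itinerary coding of the limit set. Next I would extend $h_0$ across the regular set $\Omega(\Gamma_F)$. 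Because $\partial R$ sits in a fundamental region, the whole orbit structure of $F$ decomposes along the tiling of $\Omega(\Gamma_F)$ by translates of the fundamental domain, and the itinerary coding of \prettyref{subsec:Symbolic-dynamics} matches the symbolic coding of the Schottky group. I would define $h$ to carry each Fatou component of $F$ to the corresponding component of $F'$ respecting itineraries, which is possible since by \prettyref{thm:ContDefSpid} the sets $\mathcal{PD}(F)$ and $\mathcal{PD}(F')$ are close and the combinatorics of the stratification $\mathcal{PD}_n$ is preserved under the small perturbation.

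The conjugacy relation $F'\circ h=h\circ F$ would then be verified piece by piece. On a point $z$ with itinerary $(m_1,m_2,\dots)$, applying $F$ acts by $f_{m_1}^{-1}$ (in the group) and shifts the itinerary, while $h$ intertwines the generators with the primed ones by construction; the semiconjugacy $I_F\circ F=\sigma\circ I_F$ from \prettyref{subsec:Symbolic-dynamics} guarantees the two sides agree symbolically, and the equivariance of $h$ on the group action upgrades this to an actual conjugacy on the sphere. The main obstacle I anticipate is showing that $h$ is genuinely a homeomorphism of all of $\CHat$, and in particular continuous across $\mathcal{PD}(F)=\alpha(F)\cup\bigcup_n F^{-n}(\partial R)$: one must check that the component-wise definition glues continuously at the pre-discontinuity set, which is where \prettyref{thm:ContDefSpid} does the heavy lifting, and that the extension to the nowhere-dense invariant set $\alpha(F)\subset\Lambda(\Gamma_F)$ agrees with $h_0$. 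Controlling the geometry so that diameters of the nested pieces $\overline{F^{-n}(C_{n,t})}$ shrink uniformly — ensuring $h$ is well-defined and bicontinuous on $\alpha(F)$ — is the delicate step, and I expect it to rest on the uniform contraction properties of the Schottky generators on the fundamental tiling.
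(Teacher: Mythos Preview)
Your outline takes a genuinely different route from the paper, and it has a concrete gap. The paper does \emph{not} build the conjugacy by hand from a limit-set conjugacy $h_0$ plus a component-by-component extension; instead it defines, for $z\in F^{-n}(\partial R)$, the map $\varphi(\lambda,z)=F'^{-n}\circ F^{n}(z)$ and observes that this is a \emph{holomorphic motion} of $E=\bigcup_{n\ge 0}F^{-n}(\partial R)$ parametrized by $\lambda\in\mathcal{N}_F\subset\C^6$. The Schottky hypothesis is used to check that the $\gamma(\partial R)$ are pairwise disjoint simple closed curves, so $\varphi_\lambda$ is injective. Then the extended $\lambda$-lemma of Sullivan--Thurston upgrades $\varphi$ first to $\overline{E}=\mathcal{PD}(F)$ and then to a quasiconformal homeomorphism $h_\lambda$ of the whole sphere, which conjugates $F$ to $F'$ by construction. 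All of the continuity and gluing issues you flag as the ``main obstacle'' are absorbed by the $\lambda$-lemma; nothing needs to be checked at $\alpha(F)$ separately.

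In your plan, the invocation of \prettyref{thm:ContDefSpid} is a misapplication: that theorem fixes $f,g$ and varies $\partial R$, whereas here $\partial R$ is fixed and $(f,g)$ varies, so it does not give the closeness of $\mathcal{PD}(F)$ and $\mathcal{PD}(F')$ you need. Even granting an analogous continuity statement in the $(f,g)$ variables, Hausdorff closeness of the pre-discontinuity sets does not by itself produce a homeomorphism of $\CHat$, and ``carry each Fatou component to the corresponding one respecting itineraries'' does not specify a map on points, let alone one that glues continuously along $\mathcal{PD}(F)$. Your instinct that uniform contraction of the Schottky generators is the mechanism is correct, but making that precise essentially reinvents the $\lambda$-lemma; the paper's approach is both shorter and yields the bonus that $h_\lambda$ is quasiconformal.
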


\begin{proof}Being $\Gamma_{F}=\left\langle f,g\right\rangle $ a
Schottky group, exists $C_{1},C_{2},C_{3},C_{4}$ simple closed curves such that
$f$ maps $C_{1}$ onto $C_{2}$ reversing orientation, that is, the
domain interior of $C_{1}$ is mapped to the domain exterior of $C_{2}$.
Analogously for $g$ mapping $C_{3}$ to $C_{4}$. Those closed curves bound
a common region $\mathcal{R}$ (called fundamental region), and can
be chosen to contain $\partial R$.

Marked Schottky groups with two generators are a domain in $\C^{3}$, then the inverse
image from projection to marked Schottky groups are a domain in $PSL(2,\C)\times PSL(2,\C)$
(see note below). In this way, we can choose a neighborhood $\mathcal{N}_{F}$
in $X_{\partial R}$ such that for all $F'\in\mathcal{N}_{F}$ the
group $\Gamma_{F'}=\left\langle f',g'\right\rangle $ is a Schottky
group and $\partial R$ is into the fundamental region $\mathcal{R}'$
of $\Gamma_{F'}$.

All sets $\gamma(\partial R)$ with $\gamma\in\Gamma_{F'}$ are distinct
and do not intersect each other, because $\partial R\subset\mathcal{R}'\subset\Omega(\Gamma_{F'})$,
$f'(\mathcal{R}')\cap\mathcal{R}'=\emptyset$, $g'(\mathcal{R}')\cap\mathcal{R}'=\emptyset$
and $\Gamma_{F'}$ is a free group (see \cite{Mas}). Then for all
$F'\in\mathcal{N}_{F}$, $\mathcal{PD}(F')$ is union of distinct non-intersecting
simple closed curves plus the associated $\alpha(F')\subset\Lambda(\Gamma_{F'})$.

We construct $\varphi:\mathcal{N}_{F}\times E\rightarrow\CHat$,
an holomorphic motion of \linebreak{}
$E=\bigcup_{n\geq0}F^{-n}(\partial R)\subset \mathcal{PD}(F)$, as follow.
For $\lambda=(\lambda_{1},\dots,\lambda_{6})$ associated to $F'\in\mathcal{N}_{F}$
and $z\in F^{-n}(\partial R)\subset E$, define $\varphi(\lambda,z)=F'^{-n}\circ F^{n}(z)$.
Observe that $F^{n}(z)\in\partial R$ and $F'^{-n}\circ F^{n}(z)\in F'^{-n}(\partial R)$.
Each function $\varphi_{\lambda}=\varphi(\lambda,\_)$ is an injection
on $\CHat$ because $\varphi_{\lambda}$ is defined by a M\"obius
transformation in each closed curve forming the set $F^{-n}(\partial R)$.
$F'^{-n}$ is composition of M\"obius transformations $f'^{-1}$ and
$g'^{-1}$, being rational functions of $\lambda_{1},\lambda_{2},\lambda_{3}$
and $\lambda_{4},\lambda_{5},\lambda_{6}$ respectively, then $\varphi(\lambda,z_{0})$
is an holomorphic function of $\lambda$ for each fixed $z_{0}$.
If $\lambda_{0}$ is the element associated to $F$, is clear that
$\varphi(\lambda_{0},z)=z$.

Using the extended $\lambda$-lemma (see \cite{ST}), the holomorphic
motion $\varphi$ has an extension to an holomorphic motion $\tilde{\varphi}$
of $\overline{E}=\mathcal{PD}(F)$. Even more, for each $\lambda\in\mathcal{N}_{F}$,
$\tilde{\varphi}_{\lambda}$ extends to a quasiconformal homeomorphism
$h_{\lambda}:\CHat\rightarrow\CHat$. By construction, $h_{\lambda}$
conjugates $F$ with $F'$.\end{proof}

\begin{note}Recall that for two finitely generated Kleinian groups
$\Gamma_{1}=\left\langle f_{1},\dots,f_{n}\right\rangle $ and $\Gamma_{2}=\left\langle g_{1},\dots,g_{n}\right\rangle $,
it can be defined an equivalence $\Gamma_{1}\sim\Gamma_{2}$ if exists
a M\"obius transformation $h$ such that $h\circ f_{i}\circ h^{-1}=g_{i}$.
If $S\subset PSL(2,\C)\times PSL(2,\C)$ is the set of Schottky groups
of two generators, then $S/\sim$ is the set
of marked Schottky groups and is a domain in $\C^{3}$ (see \cite{Bers})
and therefore $S$ is also a domain.\end{note}

\newpage{}

\section{Appendix: Examples\label{sec:Examples}}

In all images, periodic points are colored in red and the pre-discontinuity set
in black. Other coloring is used to distinguish Fatou components.

\begin{figure}[H]

\begin{overpic}[width=0.52\textwidth]{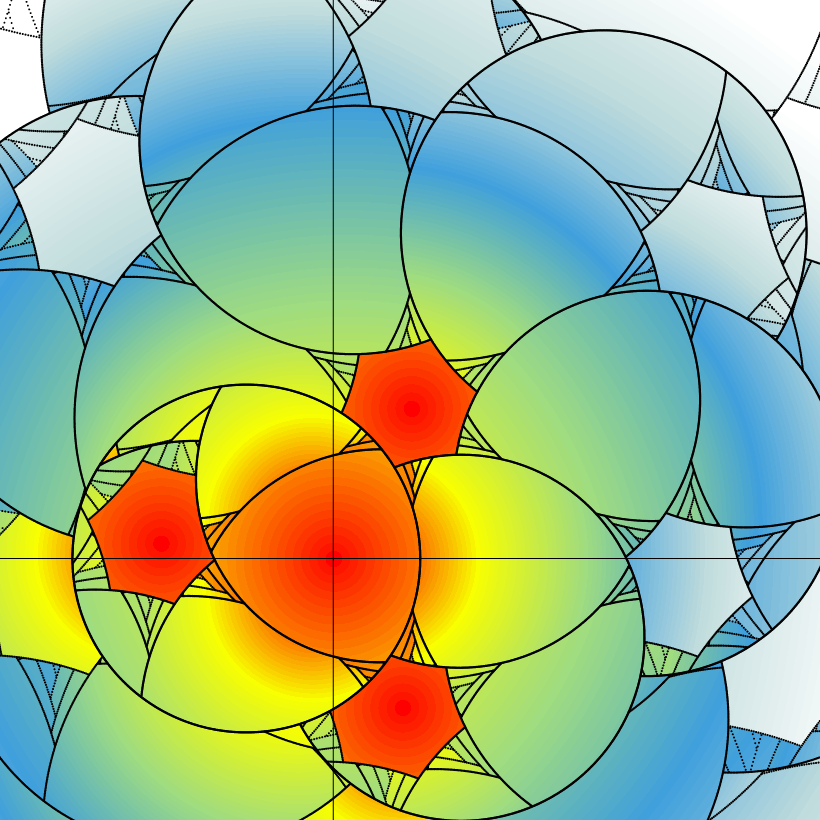} 
\put (35,35) {\huge$U$}
\put (45,50) {\huge$V$}
\put (15,32) {\huge$W$}
\put (42,10) {$F(W)$}
\end{overpic}\medskip{}

\caption{\label{fig:attr}\textbf{Fatou components: Attractive basins.} \protect \\
$F|_{R}(z)=\lambda z$ and $F|_{R^{c}}(z)=\lambda(1-z)$, where \protect \linebreak{}
$R=\left\{ z:\,|-\frac{1}{2}-z|<1\right\} $ and $\lambda=0.95e^{\frac{2}{3}\pi i}$.
Attractive basins $U$, $V$ and $W$, such that $F(U)\subset U$,
$F(V)\subset V$ and $F^{2}(W)\subset W$.}
\end{figure}

\begin{figure}[H]
\begin{overpic}[width=0.52\textwidth]{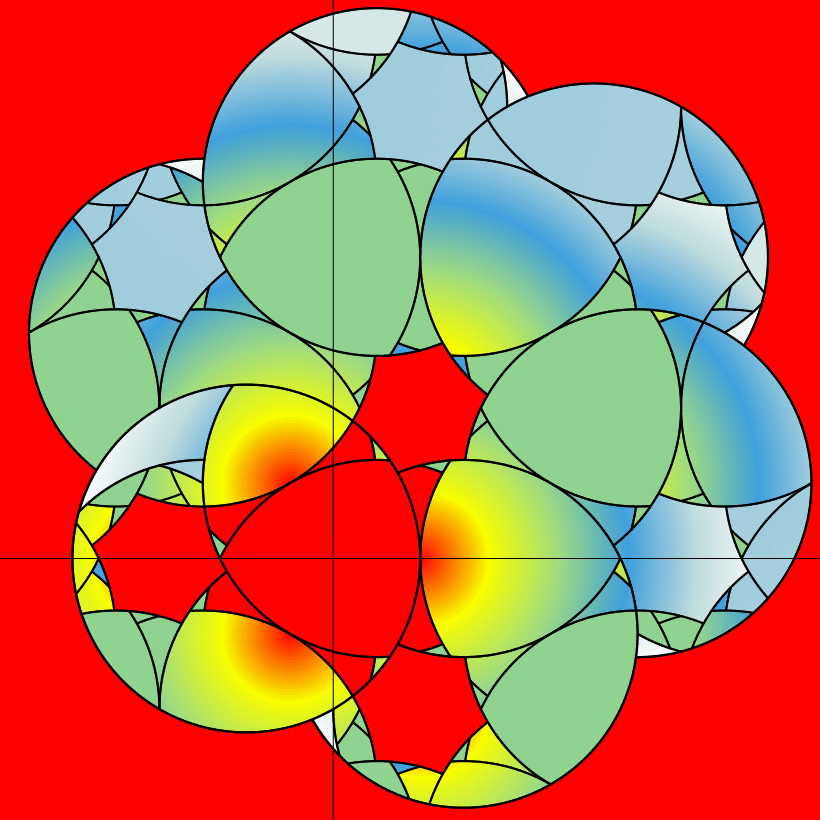} 
\put (35,32) {\huge$U$}
\put (47,48) {\huge$V$}
\put (15,30) {\huge$W$}
\put (46,10) {$F(W)$}
\put (10,90) {\huge$E$}
\end{overpic}\medskip{}

\caption{\label{fig:rot}\textbf{Fatou components: Rotation domains.} \protect \\
$F|_{R}(z)=\lambda z$ and $F|_{R^{c}}(z)=\lambda(1-z)$, where \protect \linebreak{}
$R=\left\{ z:\,|-\frac{1}{2}-z|<1\right\} $ and $\lambda=e^{\frac{2}{3}\pi i}$.
Simply connected rotation domains $U$, $V$ , $W$ and $E$ containing
a fixed point, such that $F(U)=U$, $F(V)=V$, $F^{2}(W)=W$ and $F(E)=E$.}
\end{figure}

\begin{figure}[H]
\begin{overpic}[width=0.52\textwidth]{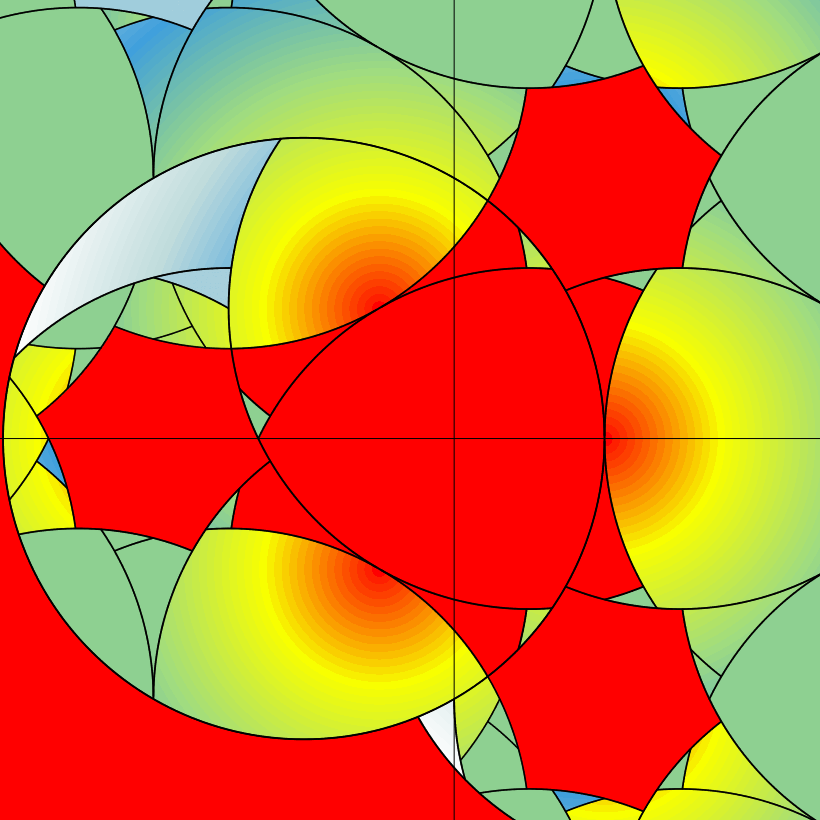} 
\put (30,54) {$U$}
\put (54,22) {$F(U)$}
\put (72,60) {$F^2(U)$}
\put (55,68) {$F^3(U)$}
\put (30,38) {$F^4(U)$}
\put (71,30) {$F^5(U)$}
\end{overpic}

\medskip{}
\caption{\label{fig:rotneutr}\textbf{Fatou components: Neutral domains.} \protect \\
$F|_{R}(z)=\lambda z$ and $F|_{R^{c}}(z)=\lambda(1-z)$, where \protect \linebreak{}
$R=\left\{ z:\,|-\frac{1}{2}-z|<1\right\} $ and $\lambda=e^{\frac{2}{3}\pi i}$.
Neutral domain $U$ such that $F^{6}|_{U}=Id$.}
\end{figure}

\begin{figure}[H]
\begin{overpic}[width=0.52\textwidth]{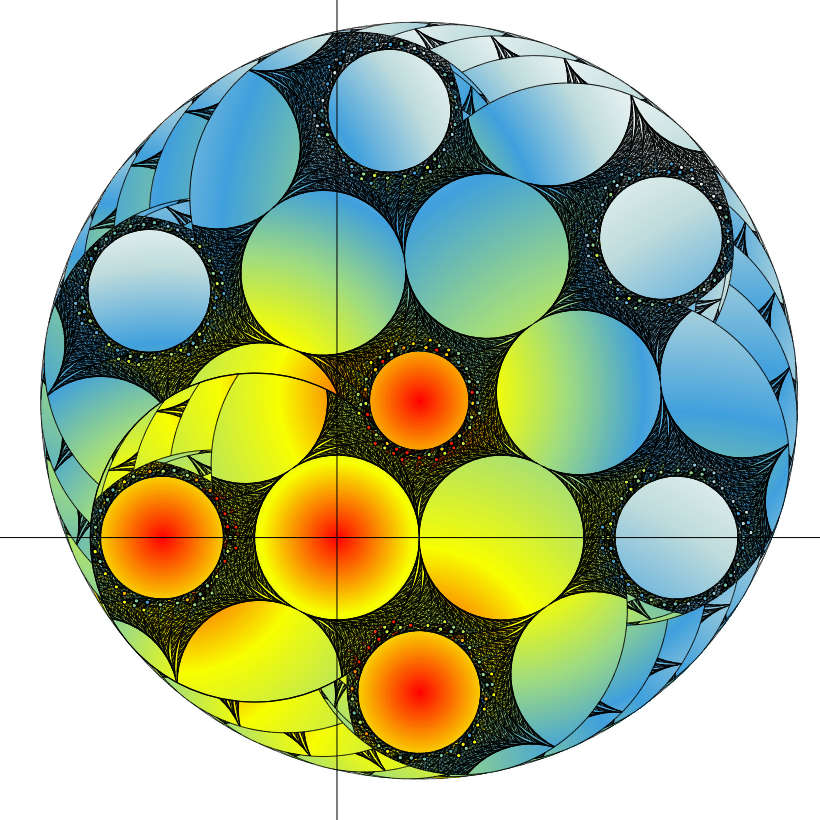} 
\put (35,35) {\huge$U$}
\put (47,50) {\huge$V$}
\put (15,32) {\huge$W$}
\put (46,10) {$F(W)$}
\put (10,90) {\huge$E$}
\end{overpic}

\medskip{}
\caption{\label{fig:rotirr}\textbf{Fatou components: Rotation domains.} \protect \\
$F|_{R}(z)=\lambda z$ and $F|_{R^{c}}(z)=\lambda(1-z)$, where \protect \linebreak{}
$R=\left\{ z:\,|-\frac{1}{2}-z|<1\right\} $ and $\lambda=e^{\alpha\pi i}$
with $\alpha$ an irrational number. Rotation domains $U$, $V$ ,
$W$ and $E$ containing a fixed point, such that $F(U)=U$, $F(V)=V$,
$F^{2}(W)=W$ and \protect \linebreak{}
$F(E)=E$.}
\end{figure}

\begin{figure}[H]
\begin{overpic}[width=0.52\textwidth]{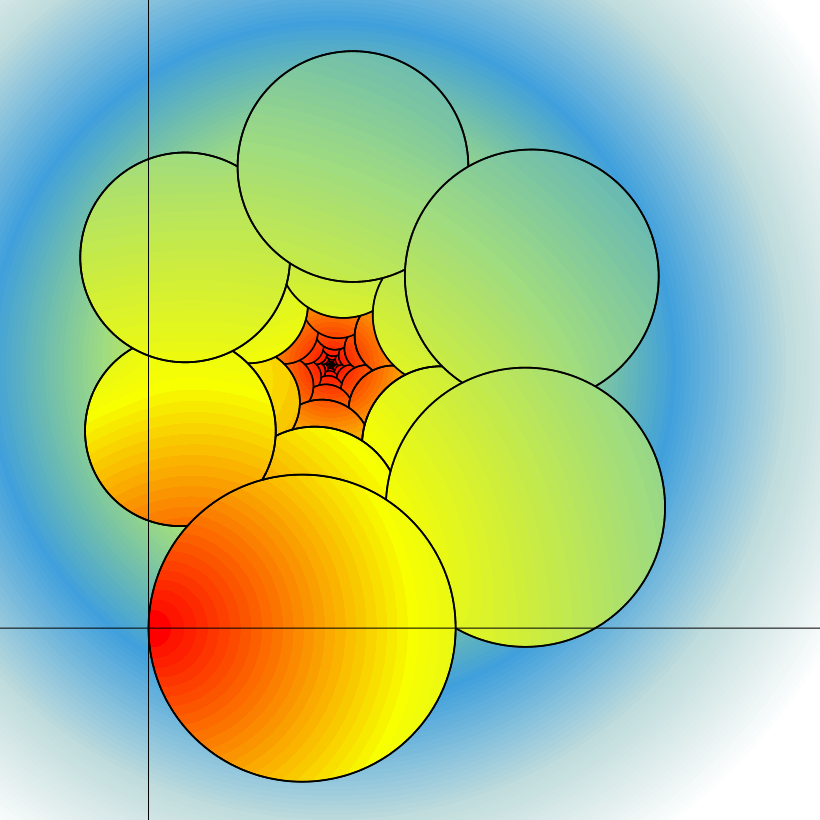} 
\put (30,30) {\Huge$U$}
\end{overpic}

\medskip{}
\caption{\label{fig:parab}\textbf{Fatou parabolic basin.} \protect \\
$F|_{R}(z)=\frac{z}{z+1}$ and $F|_{R^{c}}(z)=\lambda(1-z)$, where
\protect \linebreak{}
$R=\left\{ z:\,|-\frac{1}{2}-z|<\frac{1}{2}\right\} $ and $\lambda=1.1e^{\frac{2}{3}\pi i}$.
Parabolic basin $U$ with parabolic fixed point $0$ at $\partial U$,
such that $F(U)\subset U$. Parabolic basins with attractive point
in $\partial U$ behaves analogously.}
\end{figure}

\begin{figure}[H]
\begin{overpic}[width=0.52\textwidth]{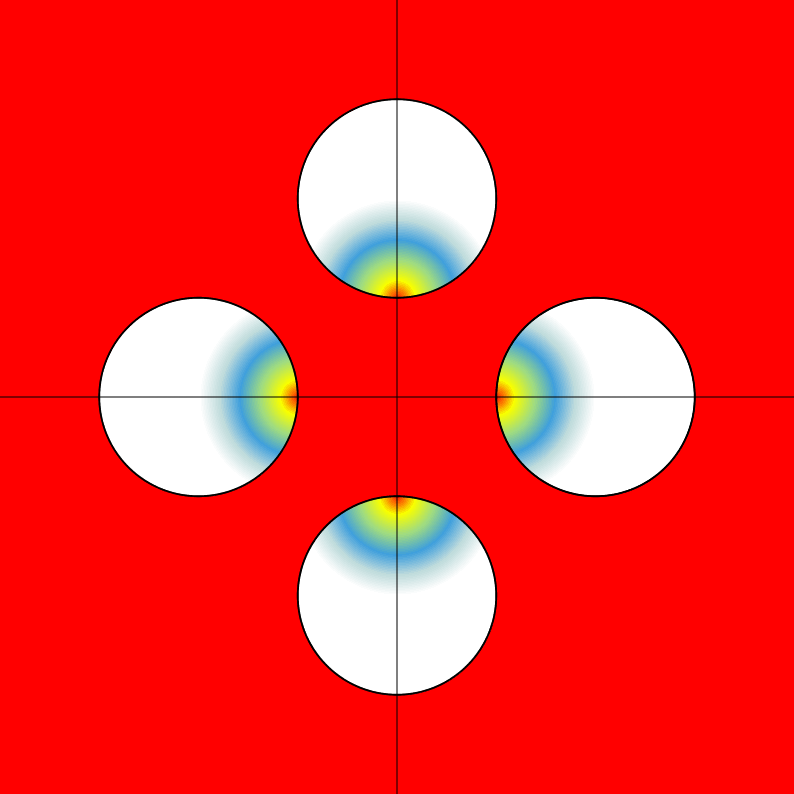} 
\put (20,80) {\Huge$U$}
\end{overpic}

\medskip{}
\caption{\label{fig:rot2fix}\textbf{Fatou components: Rotation domain.} \protect \\
$F|_{R}(z)=\lambda-\lambda z$ and $F|_{R^{c}}(z)=\lambda z$, where
\protect \linebreak{}
$R=\left\{ z:\,|1-z|<\frac{1}{2}\right\} $ and $\lambda=e^{\frac{1}{2}\pi i}$.
Rotation domain $U$ containig both fixed points of $F|_{U}$.}
\end{figure}

\begin{figure}[H]
\begin{overpic}[width=0.52\textwidth]{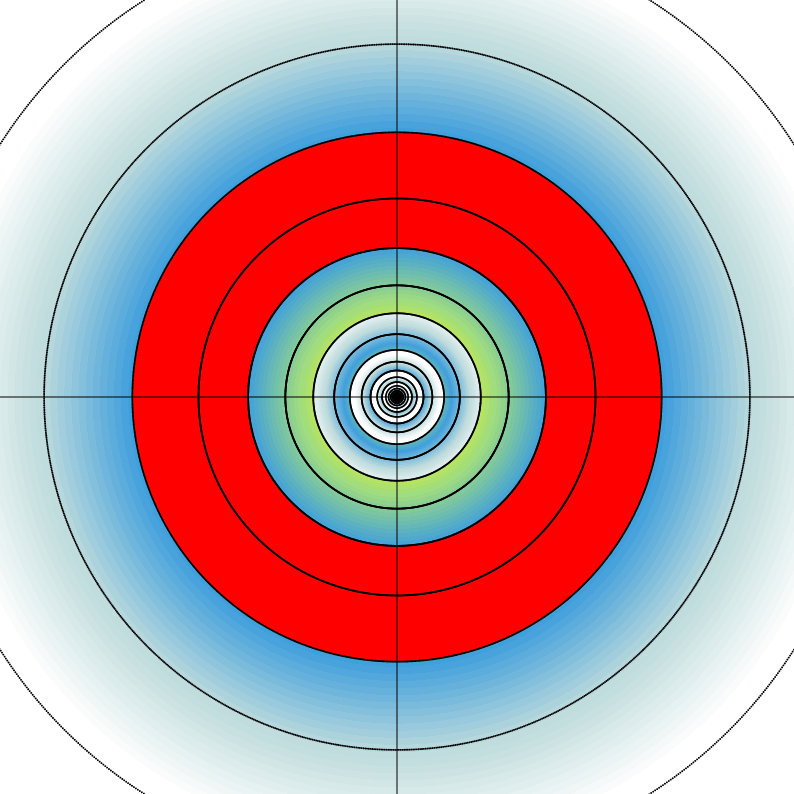} 
\put (40,75) {$U$}
\put (50,70) {$F(U)$}
\end{overpic}

\medskip{}
\caption{\label{fig:rotann}\textbf{Fatou components: Rotation domains.}\protect \\
$F|_{R}(z)=\frac{4}{3}\lambda z$ and $F|_{R^{c}}(z)=\frac{3}{4}\lambda z$,
where $R=\left\{ z:\,|z|<1\right\} $ and $\lambda=e^{\frac{1}{3}\pi i}$.
Rotation domain $U$ without fixed points, such that $F^{2}|_{U}(z)=e^{\frac{2}{3}\pi i}z$.}
\end{figure}

\begin{figure}[H]
\begin{overpic}[width=0.52\textwidth]{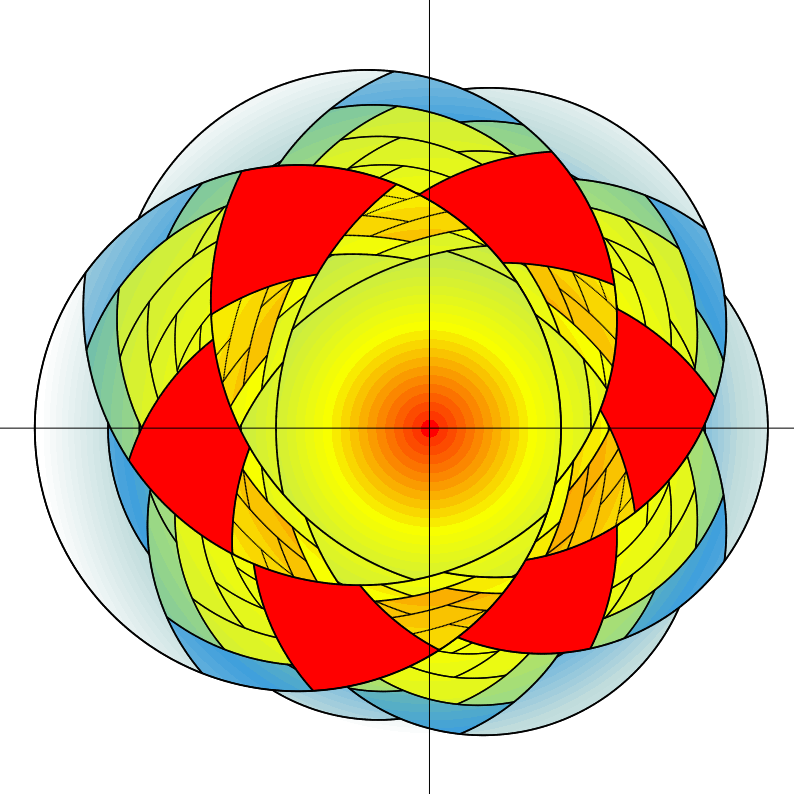} 
\put (35,70) {$U$}
\put (18,42) {$F(U)$}
\put (36,18) {$F^2(U)$}
\put (62,20) {$F^3(U)$}
\put (76,48) {$F^4(U)$}
\put (58,72) {$F^5(U)$}
\end{overpic}

\medskip{}
\caption{\label{fig:rotext}\textbf{Fatou neutral domains.}\protect \\
$F|_{R}(z)=\frac{95}{100}\lambda z$ and $F|_{R^{c}}(z)=\frac{100}{95}\lambda z$,
where \protect \linebreak{}
$R=\left\{ z:\,|-\frac{1}{2}-z|<1\right\} $ and $\lambda=e^{\frac{1}{3}\pi i}$.
Neutral domain $U$ such that $F^{6}|_{U}=Id$.}
\end{figure}

\begin{figure}[H]
\begin{overpic}[width=0.52\textwidth]{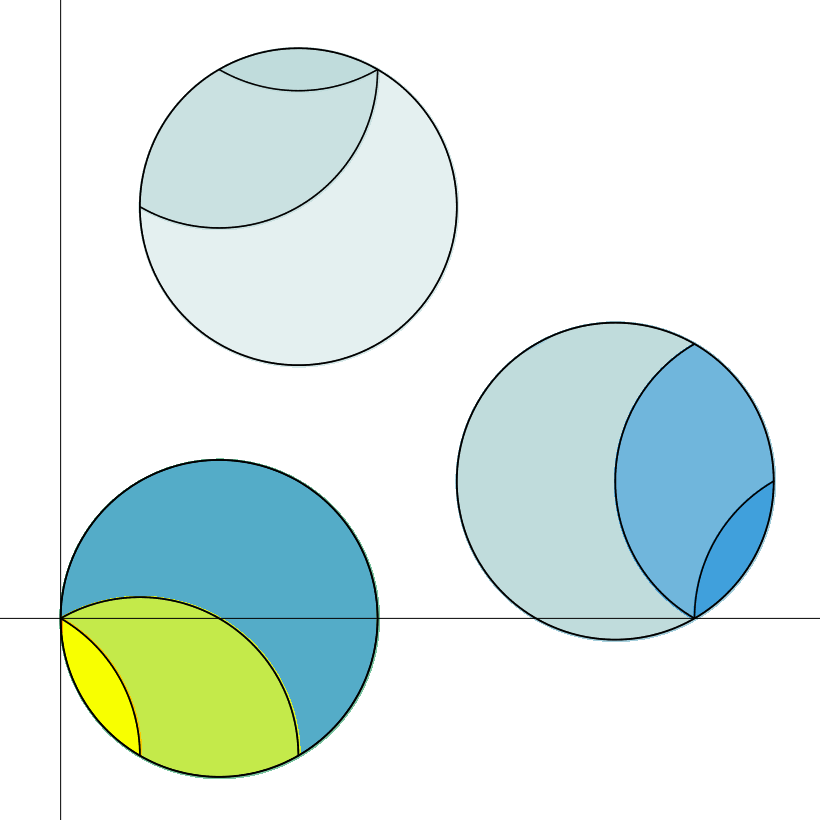}
\put (12,12) {$(0,0,0,1,1,\dots)$}
\put (20,20) {$(0,0,1,1,\dots)$}
\put (15,32) {$(0,1,1,\dots)$}
\put (90,30) {$(1,0,0,0,1,1,\dots)$}
\put (80,45) {$(1,0,0,1,1,\dots)$}
\put (70,55) {$(1,0,1,1,\dots)$}
\put (32,92) {$(1,1,0,0,0,1,1,\dots)$}
\put (22,80) {$(1,1,0,0,1,1,\dots)$}
\put (25,65) {$(1,1,0,1,1,\dots)$}
\put (5,50) {$(1,1,\dots)$}
\end{overpic}

\medskip{}
\caption{\label{fig:itin}\textbf{Itineraries.}\protect \\
$F|_{R}(z)=\lambda z$ and $F|_{R^{c}}(z)=\lambda(1-z)$, where $R=\left\{ z:\,|\frac{1}{4}-z|<\frac{1}{4}\right\} $
and $\lambda=e^{\frac{1}{3}\pi i}$. Each Fatou component is determined
by one itinerary.}
\end{figure}

\begin{figure}[H]
\begin{overpic}[width=0.52\textwidth]{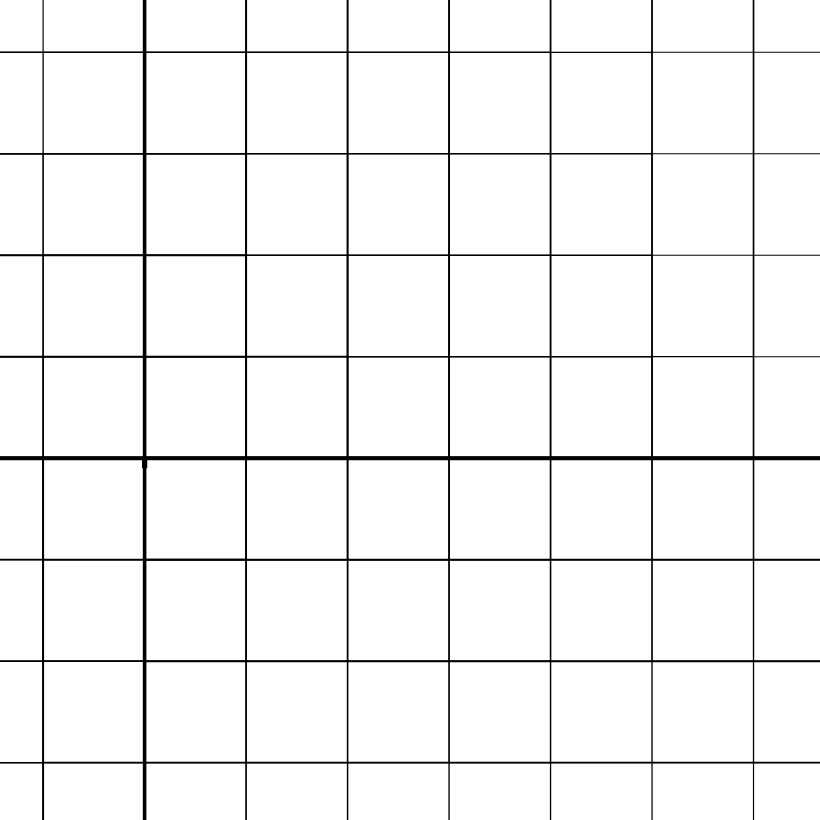}
\put (15,41) {\tiny$0$}
\put (19,49) {\tiny$U$}
\put (30,49) {\tiny$F(U)$}
\put (30,36) {\tiny$F^2(U)$}
\put (18,62) {\tiny$F^3(U)$}
\put (43,49) {\tiny$F^4(U)$}
\put (30,23) {\tiny$F^5(U)$}
\put (30,62) {\tiny$F^6(U)$}
\put (43,36) {\tiny$F^7(U)$}
\put (18,74) {\tiny$F^8(U)$}
\put (55,49) {\tiny$F^9(U)$}
\put (30,11) {\tiny$F^{10}(U)$}
\put (43,62) {\tiny$F^{11}(U)$}
\put (43,23) {\tiny$F^{12}(U)$}
\put (30,74) {\tiny$F^{13}(U)$}
\put (55,36) {\tiny$F^{14}(U)$}
\put (18,86) {\tiny$F^{15}(U)$}
\end{overpic}

\medskip{}
\caption{\label{fig:wander}\textbf{Wandering domains.}\protect \\
$F|_{R}(z)=iz$ and $F|_{R^{c}}(z)=-iz+1+i$, where $R=\left\{ z:\,Im(z)<0\right\} $.
First iterations in the orbit of component $U=(0,1)\times(0,1)$.}
\end{figure}

\begin{figure}[H]
\begin{centering}
\includegraphics[scale=0.2]{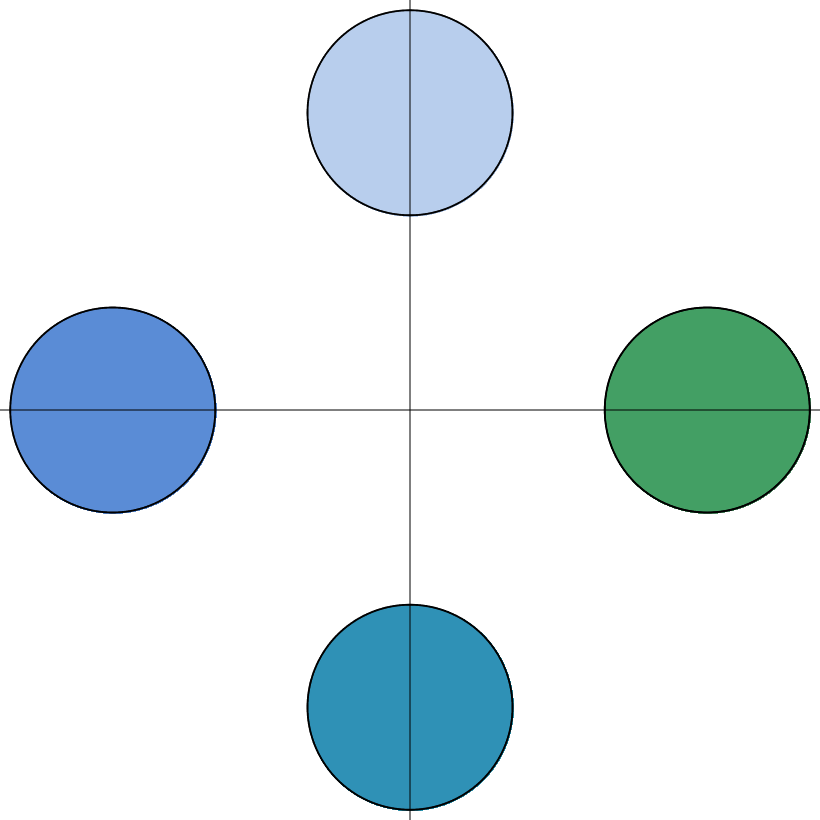}~~~~\includegraphics[scale=0.2]{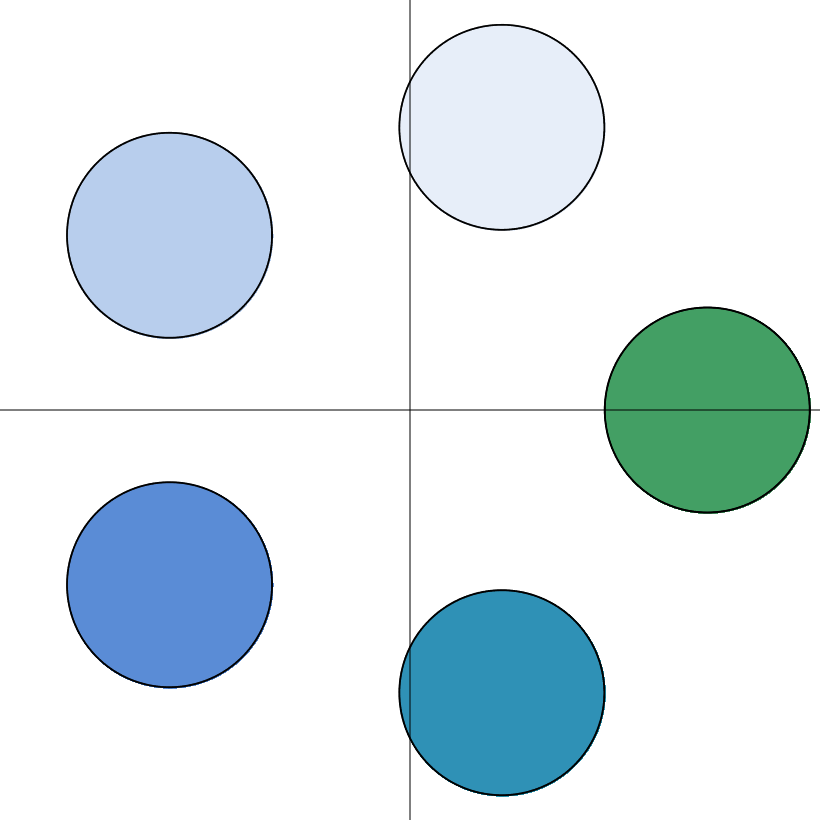}
\par\end{centering}
\medskip{}
\caption{\label{fig:conn}\textbf{Connectivity of Fatou components.}\protect \\
$F|_{R}(z)=2z$ and $F|_{R^{c}}(z)=\lambda z$, where $R=\left\{ z:\,|\frac{3}{2}-z|<\frac{1}{2}\right\} $
and $\lambda=e^{\frac{1}{2}\pi i}$ at left and $\lambda=e^{\frac{2}{5}\pi i}$
at right. The component outside circles is $4$-connected on left
and $5$-connected on right.}
\end{figure}

\begin{figure}[H]
\begin{centering}
\includegraphics[scale=0.2]{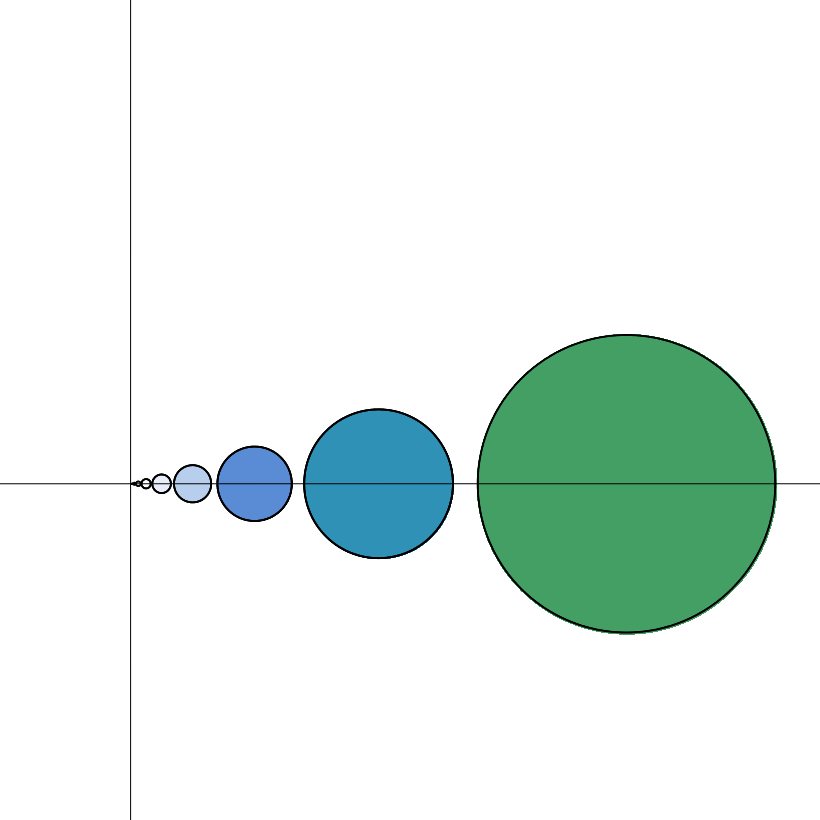}~~~~\includegraphics[scale=0.2]{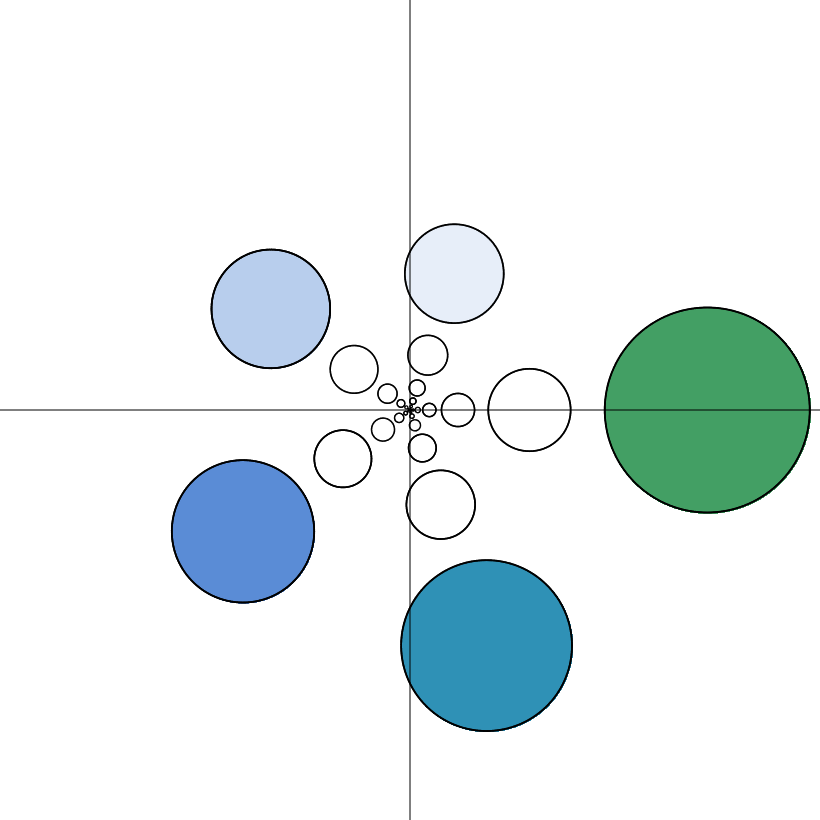}
\par\end{centering}
\medskip{}
\caption{\label{fig:conninfty}\textbf{Fatou components with $\infty$ connectivity.}\protect \\
At left, $F|_{R}(z)=z$ and $F|_{R^{c}}(z)=2z$, where\protect \linebreak{}
$R=\left\{ z:\,|1-z|<\frac{1}{3}\right\} $. At right, $F|_{R}(z)=2z$
and $F|_{R^{c}}(z)=\lambda z$, where $R=\left\{ z:\,|\frac{3}{2}-z|<\frac{1}{2}\right\} $
and $\lambda=1.2e^{\frac{2}{5}\pi i}$. In both cases, the component
outside circles is $\infty$-connected.}
\end{figure}

\begin{figure}[H]
\begin{centering}
\includegraphics[scale=0.2]{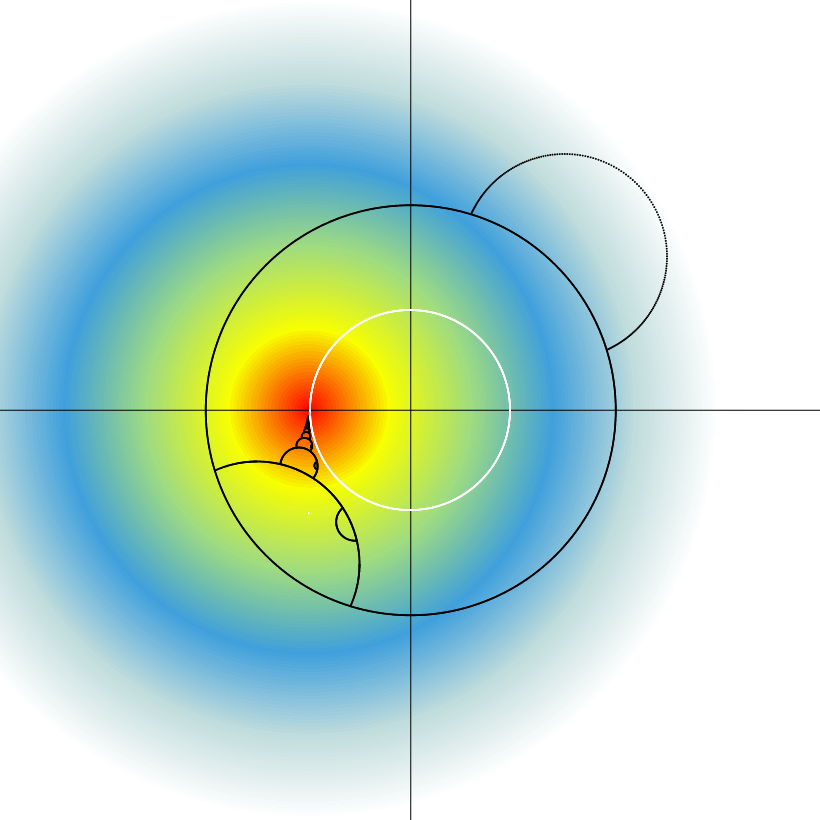}~~~~\includegraphics[scale=0.2]{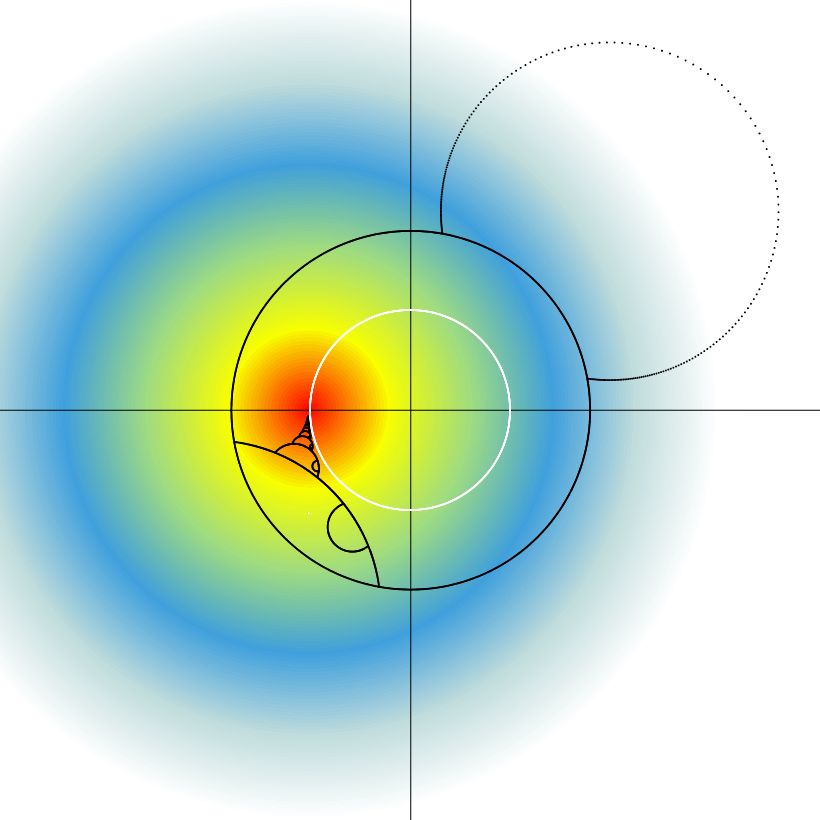}
\par\end{centering}
\medskip{}
\caption{\label{fig:spidstable}\textbf{Stability through deformations of $\partial R$.}\protect \\
$F|_{R}(z)=\frac{(1+i)z+i}{-i+(1-i)z}$ and $F|_{R^{c}}(z)=\frac{(1+i)z-i}{i+(1-i)z}$,
with \protect \linebreak{}
$R=\left\{ z:\,|z|<2\right\} $ at left and $R=\left\{ z:\,|z|<2-\varepsilon\right\} $
for some $\varepsilon>0$ at right. $\Gamma_{F}$ is a Fuchsian group
with $\Lambda(\Gamma_{F})=S^{1}$ (in white), then $\partial R\cap\Lambda(\Gamma_{F})=\emptyset$
in both cases. Therefore the map $\partial R\protect\mapsto \mathcal{PD}(F)$
is continuous.}
\end{figure}

\begin{figure}[H]
\begin{centering}
\includegraphics[scale=0.15]{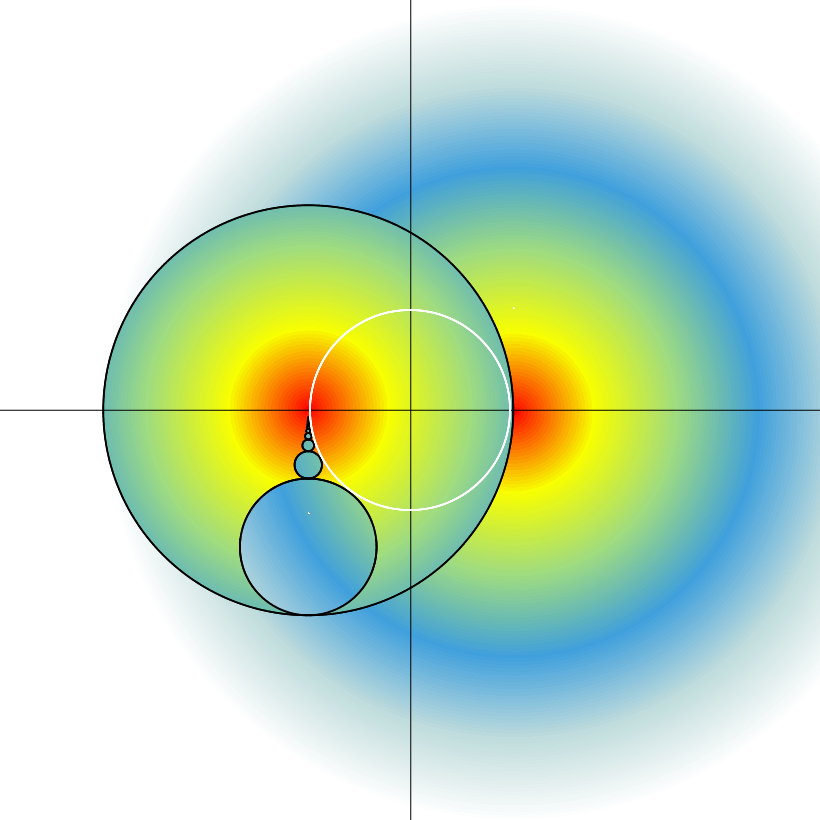}~~\includegraphics[scale=0.15]{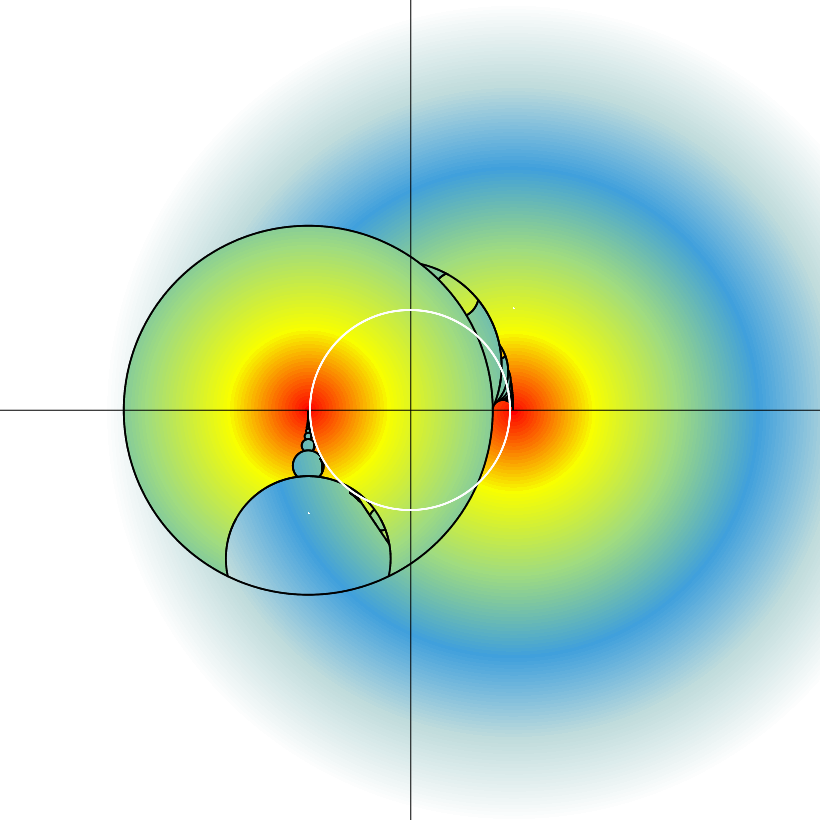}~~\includegraphics[scale=0.15]{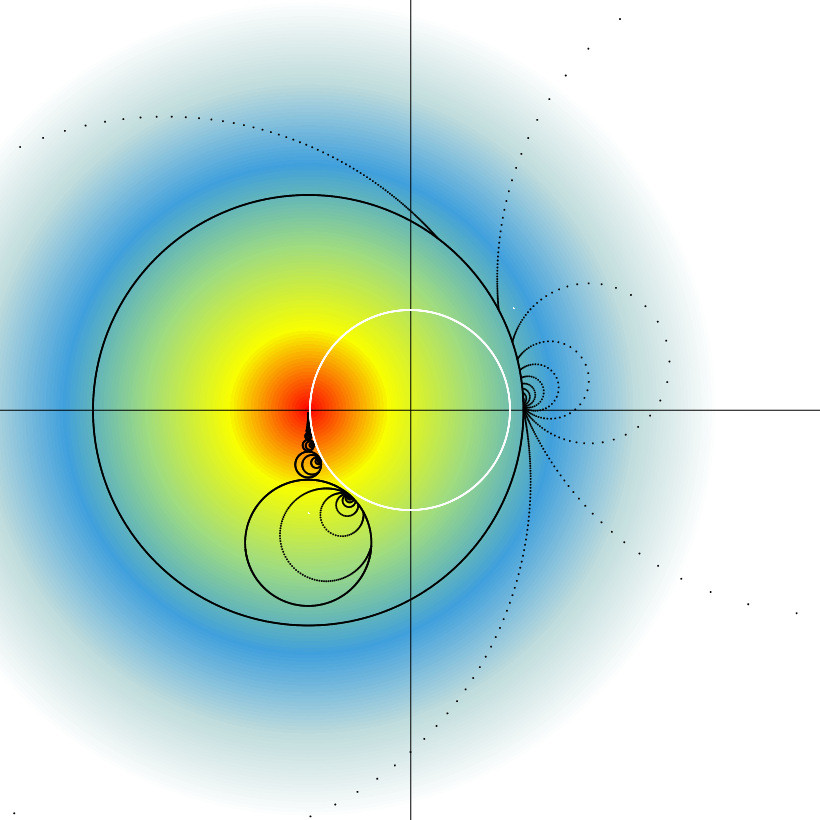}
\par\end{centering}
\medskip{}
\caption{\label{fig:spidunstable}\textbf{Unstability through deformations
of $\partial R$.}\protect \\
The same transformations from figure \ref{fig:spidstable}, but \protect \linebreak{}
$R=\left\{ z:\,|-\frac{3}{2}-z|<1\right\} $ at left, $R=\left\{ z:\,|-\frac{3}{2}-z|<1-\varepsilon\right\} $
at center and $R=\left\{ z:\,|-\frac{3}{2}-z|<1+\varepsilon\right\} $
at right, for some $\varepsilon>0$. Since $\Lambda(\Gamma_{F})=S^{1}$ (in white),
$\partial R\cap\Lambda(\Gamma_{F})\protect\neq\emptyset$. In those
cases, the map $\partial R\protect\mapsto \mathcal{PD}(F)$ is not continuous.
In other words, $F$ is unstable under deformations of $\partial R$.}
\end{figure}

\begin{figure}[H]
\begin{centering}
\includegraphics[scale=0.2]{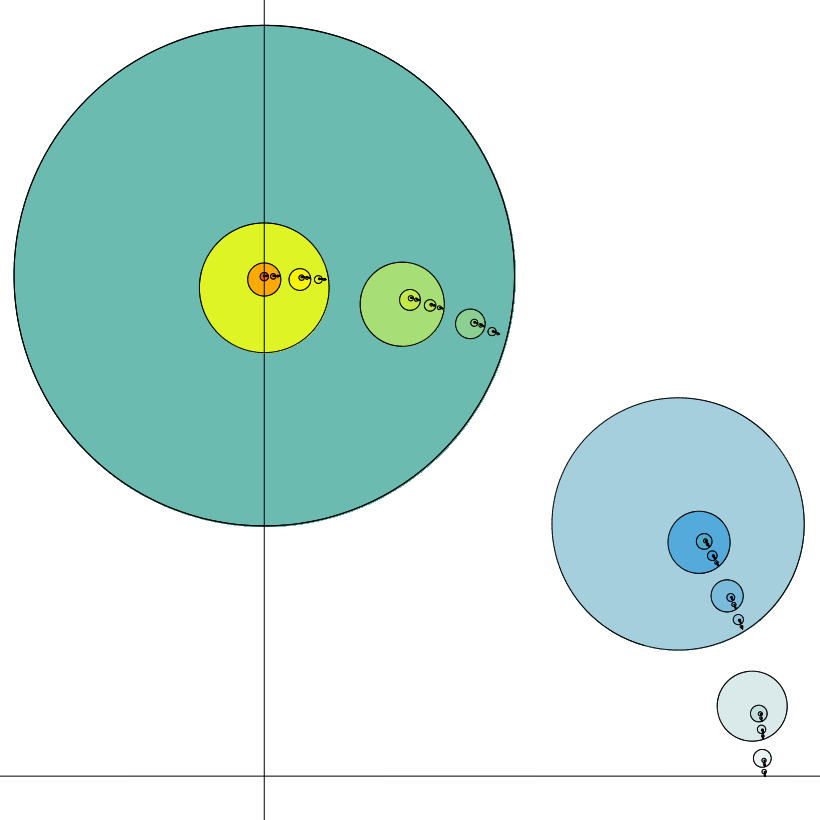}~~~~\includegraphics[scale=0.2]{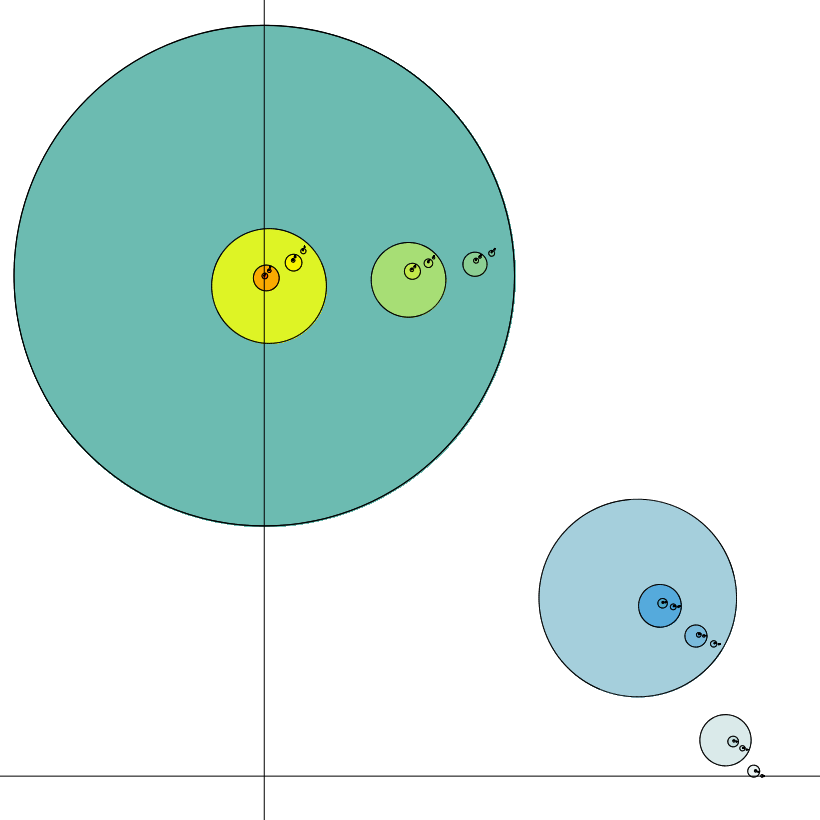}
\par\end{centering}
\medskip{}
\caption{\label{fig:schottky}\textbf{Structural stability.}\protect \\
$F|_{R}(z)=\frac{z-0.6i}{0.6i+z}$ and $F|_{R^{c}}(z)=\frac{z-0.6}{-0.6+z}$
at left and \protect \linebreak{}
$F_{\lambda}|_{R}(z)=\frac{z-\lambda i}{\lambda i+z}$ and $F_{\lambda}|_{R^{c}}(z)=\frac{z-\lambda}{-\lambda+z}$
with $|\lambda-0.6|\ll1$ at right, where $R=\left\{ z:\,|i-z|<\frac{1}{2}\right\} $.
In this case $\Gamma_{F}$ is a Schottky group and $\partial R$ is
contained in a fundamental region of $\Gamma_{F}$. Then $F$ and
$F_{\lambda}$ are quasiconformally conjugated.}
\end{figure}


\section{Appendix: Technical results and constructions\label{sec:Technical-lemmas}}

Some lemmas and propositions about inverse sets and invariance of
pre-discontinuity set and Fatou set of a piecewise conformal map $F$.

\setcounter{lema}{1}

\begin{lema}\label{lem:F-1AB}Let be $A,B\subset\CHat$, then
$F^{-1}(A\cup B)=F^{-1}(A)\cup F^{-1}(B)$.\end{lema}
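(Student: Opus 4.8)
The plan is to reduce the claim to the standard set-theoretic fact that preimages under a single-valued map distribute over unions. The one thing worth noting at the outset is that, although the piecewise conformal map $F$ is assembled from different M\"obius transformations on the different pieces $R_{m}$, it is nonetheless a genuine single-valued function $F:\CHat\rightarrow\CHat$ by the Definition of a piecewise conformal map; consequently the preimage $F^{-1}(S)=\left\{ z\in\CHat:F(z)\in S\right\}$ is unambiguously defined for every $S\subset\CHat$. Once this is observed, the piecewise and conformal structure plays no role whatsoever, and the identity is just the elementary commutation of preimage with union.

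Granting this, I would argue by a chain of logical equivalences at the level of points. For an arbitrary $z\in\CHat$ one has $z\in F^{-1}(A\cup B)$ if and only if $F(z)\in A\cup B$, which holds if and only if $F(z)\in A$ or $F(z)\in B$, which in turn holds if and only if $z\in F^{-1}(A)$ or $z\in F^{-1}(B)$, that is, if and only if $z\in F^{-1}(A)\cup F^{-1}(B)$. Since the two sets therefore contain exactly the same points, they coincide, which is the assertion.

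There is no real obstacle in this lemma: the statement is valid for the preimage under any function at all, and it is recorded here only because it is invoked repeatedly in the inductive manipulations of the stratified sets $\mathcal{PD}_{n}(F)$ and in the continuity arguments of \prettyref{sec:Deformations-and-Stability}. If one wished, the same one-line equivalence argument would immediately yield the analogous distributivity over arbitrary (not merely binary) unions, but the binary form stated is all that the later proofs require.
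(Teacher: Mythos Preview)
Your proof is correct, and it takes a simpler route than the paper's own argument. The paper establishes the identity by first expanding via the piecewise structure, writing
\[
F^{-1}(A\cup B)=\bigcup_{m=1}^{M}\big((F|_{R_{m}})^{-1}(A\cup B)\cap R_{m}\big),
\]
then distributing each $(F|_{R_{m}})^{-1}$ over $A\cup B$ inside the $m$th piece, and finally regrouping the two resulting unions back into $F^{-1}(A)\cup F^{-1}(B)$. Your observation that $F$ is already a single-valued function $\CHat\to\CHat$ makes this detour unnecessary: the identity holds for the preimage under any function whatsoever, by the one-line chain of equivalences you give. What the paper's computation perhaps buys is a rehearsal of the piecewise preimage formula $F^{-1}(X)=\bigcup_{m}\big((F|_{R_{m}})^{-1}(X)\cap R_{m}\big)$, which reappears in the combinatorial description of the sets $C_{n,t}$ in the technical appendix; but for the lemma as stated your argument is both shorter and conceptually cleaner, and your remark that the same reasoning extends to arbitrary unions is correct.
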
 

\begin{proof}
\[
\begin{array}{rcl}
F^{-1}(A\cup B) & = & \bigcup_{m=1}^{M}\left(\big(F|_{R_{m}}\big)^{-1}\big(A\cup B\big)\cap R_{m}\right)\\
 & = & \bigcup_{m=1}^{M}\left(\big(F|_{R_{m}}\big)^{-1}\big(A\big)\cup\big(F|_{R_{m}}\big)^{-1}\big(B\big)\cap R_{m}\right)\\
 & = & \bigcup_{m=1}^{M}\left(\big(F|_{R_{m}}\big)^{-1}\big(A\big)\cap R_{m}\right)\cup\bigcup_{m=1}^{M}\left(\big(F|_{R_{m}}\big)^{-1}\big(B\big)\cap R_{m}\right)\\
 & = & F^{-1}(A)\cup F^{-1}(B)
\end{array}
\]
\end{proof}

\begin{lema}\label{lem:F-1Spid}$F^{-1}(\mathcal{PD}_{n}(F))=F^{-n+1}(\partial R)\cup\dots\cup F^{-1}(\partial R)=\mathcal{PD}_{n+1}(F)-\partial R$,
for all $n\in\N$.\end{lema}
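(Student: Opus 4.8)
The plan is to reduce the statement to the distributivity of the preimage operator over finite unions, which is exactly \prettyref{lem:F-1AB}, combined with the elementary identity $F^{-1}\circ F^{-k}=F^{-(k+1)}$ for iterated preimages. Since $\mathcal{PD}_{n}(F)=\bigcup_{k=0}^{n}F^{-k}(\partial R)$ by definition, I would first apply \prettyref{lem:F-1AB} inductively (it gives the two-set case, and a trivial induction upgrades it to any finite union) to pull $F^{-1}$ inside the union, obtaining $F^{-1}(\mathcal{PD}_{n}(F))=\bigcup_{k=0}^{n}F^{-1}\big(F^{-k}(\partial R)\big)$.

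Next I would justify the iterate identity for the piecewise map $F$. Using the set-builder definition $F^{-j}(A)=\{z\in\CHat:F^{j}(z)\in A\}$ together with the fact that $F^{k+1}(z)=F^{k}(F(z))$ (iteration is ordinary function composition, even though each $F$ is only piecewise M\"obius), one reads off $F^{-1}(F^{-k}(A))=\{z:F^{k}(F(z))\in A\}=F^{-(k+1)}(A)$. Substituting this into the previous display yields $F^{-1}(\mathcal{PD}_{n}(F))=\bigcup_{k=0}^{n}F^{-(k+1)}(\partial R)=\bigcup_{j=1}^{n+1}F^{-j}(\partial R)$, which is precisely the middle expression $F^{-(n+1)}(\partial R)\cup\dots\cup F^{-1}(\partial R)$. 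The base case $n=0$ is immediate, since then $F^{-1}(\mathcal{PD}_{0}(F))=F^{-1}(\partial R)$.

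For the last equality I would observe that $\mathcal{PD}_{n+1}(F)=\bigcup_{j=0}^{n+1}F^{-j}(\partial R)$ differs from the union just computed only by the level-zero stratum $F^{-0}(\partial R)=\partial R$, so that $\partial R\cup F^{-1}(\mathcal{PD}_{n}(F))=\mathcal{PD}_{n+1}(F)$, and hence $\bigcup_{j=1}^{n+1}F^{-j}(\partial R)=\mathcal{PD}_{n+1}(F)-\partial R$. The hard part — or rather the only point needing care — is the literal reading of this set difference: $\mathcal{PD}_{n+1}(F)-\partial R$ coincides with $\bigcup_{j=1}^{n+1}F^{-j}(\partial R)$ only once one checks that no positive preimage meets $\partial R$ (a fixed point of some $F|_{R_m}$ sitting on $\partial R$ would, for instance, place a point of $\partial R$ inside $F^{-1}(\partial R)$). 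I would therefore interpret the term $-\partial R$ as the removal of the level-zero stratum and emphasize the unconditional consequence $F^{-1}(\mathcal{PD}_{n}(F))\cup\partial R=\mathcal{PD}_{n+1}(F)$, which is the form actually invoked in the proof of \prettyref{thm:ContDefSpidN}.
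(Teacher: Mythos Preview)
Your argument is correct and is essentially the paper's own proof, just organized as a direct computation rather than as an induction on $n$: the paper splits $\mathcal{PD}_{n}(F)=F^{-n}(\partial R)\cup\mathcal{PD}_{n-1}(F)$, applies \prettyref{lem:F-1AB} once, and invokes the inductive hypothesis, whereas you extend \prettyref{lem:F-1AB} to finite unions and apply it in one shot --- the same two ingredients (distributivity and $F^{-1}\circ F^{-k}=F^{-(k+1)}$) either way. Your caveat about the literal set difference is well taken and in fact applies equally to the paper's base case $F^{-1}(\partial R)=\mathcal{PD}_{1}(F)-\partial R$; the identity $F^{-1}(\mathcal{PD}_{n}(F))\cup\partial R=\mathcal{PD}_{n+1}(F)$ that you single out is indeed the form used in \prettyref{thm:ContDefSpidN} and holds without any disjointness assumption.
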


\begin{proof}Taking $\partial R=\mathcal{PD}_{0}(F),$ then $F^{-1}\big(\mathcal{PD}_{0}(F)\big)=F^{-1}(\partial R)=\mathcal{PD}_{1}(F)-\partial R$.

Now we suppose $F^{-1}\big(\mathcal{PD}_{n-1}(F)\big)=\mathcal{PD}_{n}(F)-\partial R$,
then

\[
\begin{array}{rcl}
F^{-1}\big(\mathcal{PD}_{n}(F)\big) & = & F^{-1}\big(F^{-n}(\partial R)\cup \mathcal{PD}_{n-1}(F)\big)\\
 & = & F^{-1}\big(F^{-n}(\partial R)\big)\cup F^{-1}\big(\mathcal{PD}_{n-1}(F)\big)\\
 & = & F^{-n-1}(\partial R)\cup\big(\mathcal{PD}_{n}(F)-\partial R\big)\\
 & = & \mathcal{PD}_{n+1}(F)-\partial R
\end{array}
\]
\end{proof}

\begin{prop}\label{prop:InvariantSets}The Fatou set is forward invariant
and the pre-discontinuity set is backward invariant.\end{prop}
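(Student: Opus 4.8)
The plan is to reduce the whole statement to the single assertion that $\mathcal{PD}(F)$ is backward invariant, since forward invariance of the Fatou set $\mathcal{F}(F)=\CHat-\mathcal{PD}(F)$ is then just its formal complement. Write $E=\bigcup_{n\geq 0}F^{-n}(\partial R)$, so that $\mathcal{PD}(F)=\overline{E}$. First I would record the elementary fact that $E$ itself is backward invariant. Since $F$ is a genuine function of $\CHat$, the definition $F^{-n}(A)=\{z:F^{n}(z)\in A\}$ gives $F^{-1}\big(F^{-n}(\partial R)\big)=F^{-(n+1)}(\partial R)$, so by \prettyref{lem:F-1AB} (applied to the union),
\[
F^{-1}(E)=\bigcup_{n\geq 0}F^{-(n+1)}(\partial R)=\bigcup_{n\geq 1}F^{-n}(\partial R)\subset E.
\]

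The substantive step is to promote this inclusion to the closure, i.e. to prove $F^{-1}(\overline{E})\subset\overline{E}$. I would take $z$ with $F(z)\in\overline{E}$ and split into two cases according to the position of $z$ relative to the partition. If $z\in\partial R$, then $z\in\mathcal{PD}_{0}(F)\subset\overline{E}$ and there is nothing to prove. Otherwise $z$ lies in the interior of a unique piece $R_{m}$, where $F$ agrees with the M\"obius transformation $f_{m}=F|_{R_{m}}$. Choosing $w_{k}\in E$ with $w_{k}\to F(z)$, the points $f_{m}^{-1}(w_{k})$ converge to $f_{m}^{-1}(F(z))=z$, because $f_{m}^{-1}$ is a homeomorphism of $\CHat$; since $\mathrm{int}(R_{m})$ is open, $f_{m}^{-1}(w_{k})\in\mathrm{int}(R_{m})$ for all large $k$. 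For such $k$ one has $F\big(f_{m}^{-1}(w_{k})\big)=w_{k}\in E$, hence $f_{m}^{-1}(w_{k})\in F^{-1}(E)\subset E$; letting $k\to\infty$ gives $z\in\overline{E}=\mathcal{PD}(F)$. This establishes $F^{-1}(\mathcal{PD}(F))\subset\mathcal{PD}(F)$, the backward invariance.

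Finally, forward invariance of the Fatou set follows by complementation. If $z\in\mathcal{F}(F)$ but $F(z)\in\mathcal{PD}(F)$, then $z\in F^{-1}(\mathcal{PD}(F))\subset\mathcal{PD}(F)$, contradicting $z\notin\mathcal{PD}(F)$; hence $F(\mathcal{F}(F))\subset\mathcal{F}(F)$.

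The hard part will be the passage to the closure, where the discontinuity of $F$ across $\partial R$ could a priori obstruct pulling an approximating sequence back to $z$. The dichotomy above is precisely what defuses this: every discontinuity point already lies in $\partial R\subset\mathcal{PD}(F)$, while at any other point a genuine local conformal (hence homeomorphic) inverse branch $f_{m}^{-1}$ is available, so approximating sequences pull back continuously. The only routine verification is that $f_{m}^{-1}(w_{k})\to z$ with $z\in\mathrm{int}(R_{m})$ forces $f_{m}^{-1}(w_{k})\in\mathrm{int}(R_{m})$ for large $k$, which is immediate from openness of $\mathrm{int}(R_{m})$.
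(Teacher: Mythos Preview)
Your argument is correct, but it proceeds in the opposite order from the paper and uses a different mechanism. The paper first establishes forward invariance of the Fatou set, relying on the symbolic dynamics of Section~\ref{subsec:Symbolic-dynamics}: a Fatou component $U$ equals $\mathrm{int}(\mathcal{C}_{s})$ for some itinerary $s$, $F|_{U}$ is M\"obius so $F(U)$ is open, and the semi-conjugacy $I_{F}\circ F=\sigma\circ I_{F}$ gives $F(U)\subset\mathrm{int}(\mathcal{C}_{\sigma(s)})$, which is again in the Fatou set. Backward invariance of $\mathcal{PD}(F)$ then follows by contraposition. You instead prove $F^{-1}(\mathcal{PD}(F))\subset\mathcal{PD}(F)$ directly, first on $E=\bigcup_{n\geq0}F^{-n}(\partial R)$ and then on its closure via the dichotomy $z\in\partial R$ versus $z\in\mathrm{int}(R_{m})$, pulling approximants back through the local M\"obius branch $f_{m}^{-1}$. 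Your route is more elementary in that it avoids the coding machinery altogether and makes explicit why the discontinuities of $F$ cause no trouble (they already sit in $\mathcal{PD}_{0}(F)$); the paper's route, by contrast, exploits the structural identification of Fatou components with itinerary cells, which is conceptually pleasant but presupposes that description.
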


\begin{proof}Let be $U$ a Fatou component. Since $F|_{U}$ is a
M\"obius transformation, $F(U)$ is open. Also we have that $U=\mathrm{int}(\mathcal{C}_{s})$,
the interior of the set of points with same itinerary $s\in\Sigma_{M}$,
and $F$ is semi-conjugated with the shift $\sigma$, then $F(U)\subset\mathrm{int}(\mathcal{C}_{\sigma(s)})$.
Therefore $F\big(\CHat-\mathcal{PD}(F)\big)\subset\CHat-\mathcal{PD}(F)$.

Let $z\in \mathcal{PD}(F)$ and $w\in F^{-1}(\{z\})$. Suppose that $w\notin \mathcal{PD}(F)$,
by forward invariance of the Fatou set $F(w)=z\notin \mathcal{PD}(F)$, a
contradiction. Therefore $F^{-1}\big(\mathcal{PD}(F)\big)\subset \mathcal{PD}(F)$.\end{proof}

We denote by $\mathcal{H}(\CHat)$ the space of compact subsets
of $\CHat$ with the Hausdorff topology and metric induced by a
spherical metric. $\mathcal{H}(\CHat)$ is a complete metric space
(see for example \cite{Nad}). We recall the convergence characterization
on $\mathcal{H}(\CHat)$.

\begin{defi}A sequence of compacts $K_{n}$ \textbf{converge} to
$K$ on $\mathcal{H}(\CHat)$ if
\begin{enumerate}
\item Every neighborhood $\mathcal{N}_{z}$ of a point $z\in K$ intersects
infinitely many $K_{n}$.
\item If every neighborhood $\mathcal{N}_{z}$ of $z$ intersects infinitely
many $K_{n}$, then $z\in K$.
\end{enumerate}
And we denote this as $K_{n}\rightarrow K$.\end{defi}

In the following, we demonstrate some useful lemmas about convergence
in $\mathcal{H}(\CHat)$. Let be $A_{n}\rightarrow A$ and $B_{n}\rightarrow B$
convergent sequences in $\mathcal{H}(\CHat)$.

\begin{lema}\label{lem:HausConvUnion}$A_{n}\cup B_{n}\rightarrow A\cup B$.\end{lema}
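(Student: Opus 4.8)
The plan is to prove $A_n \cup B_n \to A \cup B$ directly from the two-part convergence characterization of the Hausdorff topology stated just above the lemma. I would treat the two conditions separately, and for each I would use the corresponding fact that $A_n \to A$ and $B_n \to B$ already satisfy these conditions. The main tool throughout is the elementary set-theoretic observation that a neighborhood $\mathcal{N}_z$ meets $A_n \cup B_n$ if and only if it meets $A_n$ or it meets $B_n$.

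\textbf{Condition (1).} First I would take any $z \in A \cup B$ and show every neighborhood $\mathcal{N}_z$ intersects infinitely many $A_n \cup B_n$. Without loss of generality suppose $z \in A$ (the case $z \in B$ is symmetric). Since $A_n \to A$, condition (1) for the sequence $A_n$ gives that $\mathcal{N}_z$ intersects infinitely many $A_n$. For each such index $n$ we have $\mathcal{N}_z \cap A_n \neq \emptyset$, hence $\mathcal{N}_z \cap (A_n \cup B_n) \neq \emptyset$. Thus $\mathcal{N}_z$ meets infinitely many of the sets $A_n \cup B_n$, as required.

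\textbf{Condition (2).} Next I would take any $z \in \CHat$ such that every neighborhood $\mathcal{N}_z$ meets infinitely many $A_n \cup B_n$, and show $z \in A \cup B$. For a fixed $\mathcal{N}_z$, the infinitely many indices $n$ with $\mathcal{N}_z \cap (A_n \cup B_n) \neq \emptyset$ split into those for which $\mathcal{N}_z \cap A_n \neq \emptyset$ and those for which $\mathcal{N}_z \cap B_n \neq \emptyset$; at least one of these subcollections must be infinite. The subtlety I anticipate is that the choice of which subcollection is infinite could a priori depend on the particular neighborhood $\mathcal{N}_z$, so I cannot immediately conclude $z \in A$ or $z \in B$. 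To handle this cleanly I would argue by contradiction: suppose $z \notin A \cup B$. Since $A$ and $B$ are compact (hence closed in $\CHat$), there is a neighborhood $\mathcal{N}_z$ disjoint from $A$ and a neighborhood $\mathcal{N}'_z$ disjoint from $B$; taking $W = \mathcal{N}_z \cap \mathcal{N}'_z$ gives a neighborhood of $z$ disjoint from both $A$ and $B$. Applying the contrapositive of condition (2) for $A_n \to A$ to this $W$ (since $W \cap A = \emptyset$ forces $z \notin A$, so $W$ can meet only finitely many $A_n$), and likewise for $B_n \to B$, we find $W$ meets only finitely many $A_n$ and only finitely many $B_n$, hence only finitely many $A_n \cup B_n$. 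This contradicts the assumption on $z$, so $z \in A \cup B$.

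Having verified both conditions, the definition of convergence in $\mathcal{H}(\CHat)$ yields $A_n \cup B_n \to A \cup B$. I expect the only genuine care needed is in condition (2), where one must avoid the false step of concluding that one fixed index set works for all neighborhoods; the contradiction argument using compactness of $A$ and $B$ to separate $z$ from both limit sets simultaneously is what makes this rigorous and is the step I would flag as the main obstacle.
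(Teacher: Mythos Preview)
Your approach is the same as the paper's: both arguments verify the two conditions of the convergence characterization directly, and your treatment of condition (1) is identical in spirit to the paper's. For condition (2) the paper simply asserts ``if every neighborhood $\mathcal{N}_z$ intersects infinitely many $A_n\cup B_n$, then $z\in A$ or $z\in B$'' without further comment, so your contradiction argument is in fact more careful than what the paper writes.

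One small slip in your execution of (2): the contrapositive of condition (2) for $A_n\to A$ says only that \emph{some} neighborhood of $z$ meets finitely many $A_n$; it does not say that your particular $W$ (chosen merely to be disjoint from $A$) does. The clean fix is to skip the disjointness step entirely: since $z\notin A$, take $V$ a neighborhood of $z$ meeting only finitely many $A_n$ (this is exactly what the contrapositive of (2) gives); since $z\notin B$, take $V'$ meeting only finitely many $B_n$; then $W=V\cap V'$ meets only finitely many $A_n\cup B_n$, and the contradiction follows. With that adjustment your argument is complete and matches the paper's route.
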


\begin{proof}Let $z\in A\cup B$. Then every neighborhood $\mathcal{N}_{z}$
intersects infinitely many $A_{n}$ or infinitely many $B_{n}$, that
is, intersects infinitely many $A_{n}\cup B_{n}$.

On the other hand, if every neighborhood $\mathcal{N}_{z}$ intersects
infinitely many $A_{n}\cup B_{n}$, then $z\in A$ or $z\in B$.\end{proof}

\begin{lema}\label{lem:HausConvInter}$A_{n}\cap B_{n}\rightarrow A\cap B-Y$
and $Y\subset\partial(A\cap B)$, where\linebreak{}
 $Y=\{z\in A\cap B:\,\,\exists\,\mathcal{N}_{z}\,\mathrm{that\,intersects}$\textbf{$\mathrm{\,finitely\,many\,\,}A_{n}\cap B_{n}\}$}
is the set of isolated points.\end{lema}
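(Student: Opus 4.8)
The plan is to prove the two assertions separately. The Hausdorff convergence $A_n\cap B_n\to(A\cap B)-Y$ should come out directly from the convergence criterion, while the containment $Y\subset\partial(A\cap B)$ is where I expect the real difficulty. For the convergence, put $K_n=A_n\cap B_n$ and $K=(A\cap B)-Y$, and check the two defining conditions. Condition (1) is immediate from the definition of $Y$: a point $z\in K$ satisfies $z\notin Y$, which says precisely that no neighborhood of $z$ meets only finitely many $K_n$, i.e.\ every $\mathcal{N}_z$ meets infinitely many $K_n$. For condition (2), assume every $\mathcal{N}_z$ meets infinitely many $K_n$; since $K_n\subset A_n$ and $K_n\subset B_n$, the same holds with $A_n$ and with $B_n$ in place of $K_n$, so applying condition (2) of the hypotheses $A_n\to A$ and $B_n\to B$ yields $z\in A$ and $z\in B$, hence $z\in A\cap B$, while the standing assumption gives $z\notin Y$, so $z\in K$. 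This argument is entirely general and uses only that conditions (1)--(2) characterize $A$ and $B$ as the sets of points whose every neighborhood meets infinitely many $A_n$, respectively $B_n$.

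For $Y\subset\partial(A\cap B)$ note first that $Y\subset A\cap B$ and that $A\cap B$ is closed, so the claim is equivalent to $Y\cap\mathrm{int}(A\cap B)=\emptyset$; that is, every interior point $z$ of $A\cap B$ must have every neighborhood meeting infinitely many $A_n\cap B_n$. The tempting argument is to fix a closed neighborhood $V$ of $z$ with $V\subset A\cap B$ and use $A_n\to A$, $B_n\to B$ to say that $A_n$ and $B_n$ eventually approximate $V$ and therefore their intersections accumulate at $z$. The genuine obstacle, and the step I would scrutinize most carefully, is that approximation of $A$ and of $B$ does not force $A_n\cap B_n$ to come near $z$: the sets $A_n$ and $B_n$ may each be arbitrarily dense in $V$ while meeting each other nowhere inside it, so density alone does not deposit points of the intersection near an interior point.

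I would therefore carry out the argument under the structural hypothesis that holds at every place the lemma is invoked, namely that one of $A$, $B$ is contained in a simple closed curve (in the application $A=f^{-1}(\partial R)$ is such a curve and $B=\overline{R_1}$ a closed domain). Then $A\cap B$ has empty interior in $\CHat$, so $\mathrm{int}(A\cap B)=\emptyset$ and the containment is automatic: $\partial(A\cap B)=\overline{A\cap B}=A\cap B\supset Y$. In this regime the lost points $Y$ are exactly the isolated contact points of the curve with the boundary of the approximated region that the finite-level intersections $A_n\cap B_n$ fail to retain, which matches the description of $Y$ as the set of isolated points and is the only situation needed for \prettyref{thm:ContDefSpidN}.
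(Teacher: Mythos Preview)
Your verification of the convergence $A_n\cap B_n\to (A\cap B)-Y$ is correct and is exactly the paper's argument: condition~(1) is the very definition of $z\notin Y$, and condition~(2) follows from $A_n\cap B_n\subset A_n$ and $A_n\cap B_n\subset B_n$ together with the hypotheses $A_n\to A$, $B_n\to B$.

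For $Y\subset\partial(A\cap B)$ your suspicion is well founded: the inclusion is \emph{false} in general. Take $A_n=\bigcup_{k=0}^{n}\{k/n\}\times[0,1]$ and $B_n=\bigcup_{k=0}^{n-1}\{(2k+1)/(2n)\}\times[0,1]$; both converge to $A=B=[0,1]^2$, yet $A_n\cap B_n=\emptyset$ for every $n$, so $Y=[0,1]^2\not\subset\partial([0,1]^2)$. The paper does \emph{not} restrict to the curve situation; it argues instead that $z\in Y$ forces some $\mathcal N_z$ to lie in $(A_n\cap B_n)^c=A_n^c\cup B_n^c\subset\overline{A_n^c}\cup\overline{B_n^c}$ for infinitely many $n$, and then invokes the union lemma to conclude $z\in\overline{A^c}\cup\overline{B^c}=\overline{(A\cap B)^c}$. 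This step tacitly assumes $\overline{A_n^c}\to\overline{A^c}$ and $\overline{B_n^c}\to\overline{B^c}$, which is neither proved nor true in general (in the example above $\overline{A_n^c}=\CHat$ for all $n$ while $\overline{A^c}\neq\CHat$). So your reduction to the case where one factor has empty interior is the honest fix, and it suffices for \prettyref{thm:ContDefSpidN}. One refinement: in the inductive step the role of $A$ is played by $f^{-1}\big(\mathcal{PD}_{n-1}(F)\big)$, a finite union of arcs and points rather than a single simple closed curve; the argument goes through unchanged because the only feature you use is that $\mathrm{int}(A\cap B)=\emptyset$.
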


\begin{proof}Let $z\in A\cap B-Y$. Then every neighborhood $\mathcal{N}_{z}$
intersects infinitely many $A_{n}$ and infinitely many $B_{n}$ but
$z\notin Y$, that is, intersects infinitely many $A_{n}\cap B_{n}$.

On the other hand, if every neighborhood $\mathcal{N}_{z}$ intersects
infinitely many $A_{n}\cap B_{n}$, then $z\in A$ and $z\in B$,
but $z\notin Y$.

If $z\in Y\subset A\cap B$, then exists $\mathcal{N}_{z}$ such that
intersects finitely many $A_{n}\cap B_{n}$. Then $\mathcal{N}_{z}$
intersects infinitely many $(A_{n}\cap B_{n})^{c}$. Since $(A_{n}\cap B_{n})^{c}=A_{n}^{c}\cup B_{n}^{c}\subset\overline{A_{n}^{c}}\cup\overline{B_{n}^{c}}$,
$\mathcal{N}_{z}$ intersects infinitely many $\overline{A_{n}^{c}}\cup\overline{B_{n}^{c}}$
and, because Lemma 1, $z\in\overline{A^{c}}\cup\overline{B^{c}}=\overline{(A\cap B)^{c}}$.
Finally, $z\in(A\cap B)\cap\overline{(A\cap B)^{c}}=\partial(A\cap B)$.\end{proof}

\begin{lema}\label{lem:HausConvHom}If $f:\CHat\rightarrow\CHat$
is continuous bijective, then $f(A_{n})\rightarrow f(A)$.\end{lema}

\begin{proof}Let $w\in f(A)$. Then exists $z\in A$ such that $f(z)=w$.
We can construct a sequence $z_{n}\rightarrow z$ with $z_{n}\in A_{n}$
because $A_{n}\rightarrow A$. As $f$ is continuous, we have $f(z_{n})\rightarrow f(z)=w$,
then every neighborhood $\mathcal{N}_{w}$ intersects infinitely many
$f(A_{n})$.

On the other hand, if every neighborhood $\mathcal{N}_{w}$ intersects
infinitely many $f(A_{n})$, then we can take a sequence $w_{n}=f(z_{n})\in f(A_{n})$
such that $w_{n}\rightarrow w$. As $f$ is continuous bijective,
$f^{-1}(w_{n})=z_{n}\rightarrow f^{-1}(w)$. Let $z=f^{-1}(w)$, then
every neighborhood $\mathcal{N}_{z}$ intersects infinitely many $A_{n}$
and, by hypothesis, $z\in A$. Finally, $w=f(z)\in f(A)$.\end{proof}

\begin{lema}\label{lem:HausConvJordanCurve}If $C_{n}\rightarrow C$
where each $C_{n}$ and $C$ are compact subsets of $\CHat$
homeomorphic to circles, then $D_{n}\rightarrow D$ and $E_{n}\rightarrow E_{n}$,
where $D_{n}$ and $D$ are the closure of the interior domains of $C_{n}$
and $C$, respectively, and $E_{n}$ and $E$ are the closure of the
exterior domains of $C_{n}$ and $C$, respectively.\end{lema}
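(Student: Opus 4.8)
The plan is to prove $D_n\to D$ and $E_n\to E$ together, combining the convergence characterization of $\mathcal H(\CHat)$ with a reduction to the model case where the limit curve is a round circle. First I would normalize the limit. By the Schoenflies theorem in the sphere every Jordan curve is tame, so there is a homeomorphism $h\colon\CHat\to\CHat$ with $h(C)=S^{1}$, $h(\mathrm{int}(D))=\D$ and $h(\mathrm{int}(E))=\CHat\setminus\overline{\D}$. Since $\CHat$ is compact, $h$ is a continuous bijection, so \prettyref{lem:HausConvHom} gives $h(C_n)\to h(C)=S^{1}$; as $h$ carries components of $\CHat\setminus C_n$ to components of $\CHat\setminus h(C_n)$ and closures to closures, the sets $h(D_n),h(E_n)$ are exactly the closed complementary domains of $h(C_n)$. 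Because both $h$ and $h^{-1}$ transport Hausdorff convergence (again \prettyref{lem:HausConvHom}), it suffices to establish the conclusion after applying $h$. Thus I may assume $C=S^{1}$, $D=\overline{\D}$, and I mark the interior point $p=0$ and the exterior point $q=\infty$.

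I would then check the two convergence conditions for $D_n\to\overline{\D}$ (the case of $E_n$ being symmetric). Fix $\epsilon>0$; for large $n$, $C_n\subset\{1-\epsilon<|z|<1+\epsilon\}$ and $S^{1}\subset\mathcal N_z$-neighborhoods of $C_n$ of radius $\epsilon$. A point $z\in S^{1}\subset\overline{\D}$ is a limit of points of $C_n\subset D_n$ directly from $C_n\to S^{1}$; a point $z\in\D$ lies at positive distance from $C_n$ for large $n$, hence in a single complementary domain, and I must show it is the \emph{bounded} one. Dually, a point $z$ with $|z|>1$ must be shown to lie in the unbounded domain, so a fixed neighborhood of $z$ misses $D_n$. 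In every case the statement reduces to the single separation claim: \emph{for all large $n$ the curve $C_n$ separates $0$ from $\infty$, with $0$ in the bounded complementary domain.}

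This separation claim is the heart of the matter and the main obstacle. The natural tool is the winding number: writing $C_n=\gamma_n(S^{1})$ for a homeomorphic parametrization, $0$ lies in the bounded domain exactly when the winding number of $\gamma_n$ about $0$ is $\pm1$, and one hopes to force this using that the radial projection $\pi\circ\gamma_n$, $\pi(z)=z/|z|$, is $O(\epsilon)$-dense in $S^{1}$. The difficulty is that Hausdorff convergence of the \emph{sets} $C_n$ does not pin down this winding number: the ``slit annulus'' curve $C_n=\partial\{\,1-\epsilon\le|z|\le1+\epsilon,\ \delta\le\arg z\le 2\pi-\delta\,\}$ satisfies $C_n\to S^{1}$ in $\mathcal H(\CHat)$ as $\epsilon,\delta\to0$, yet its bounded domain is the thin rectangle (which shrinks to $S^{1}$, not to $\D$) and it does not separate $0$ from $\infty$ at all. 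Hence the clean conclusion requires that the convergence control the parametrizations of the curves—e.g. that the $C_n$ be uniform graphs over $C$, or equivalently that $C_n$ eventually separate the marked points $p$ and $q$—and it is precisely at this step that such a hypothesis must be used. Granting it, the winding number equals $\pm1$ for large $n$, the two marked domains persist, the point-location in the previous paragraph is settled so that $D_n\to\overline{\D}$ and $E_n\to\overline{\CHat\setminus\overline{\D}}$, and transporting back by $h^{-1}$ through \prettyref{lem:HausConvHom} yields $D_n\to D$ and $E_n\to E$.
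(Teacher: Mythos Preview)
Your slit-annulus example is a genuine counterexample to the lemma as stated: the Jordan curves
\[
C_n=\partial\{\,1-\epsilon_n\le|z|\le1+\epsilon_n,\ \delta_n\le\arg z\le 2\pi-\delta_n\,\}
\]
converge to $S^1$ in $\mathcal H(\CHat)$ as $\epsilon_n,\delta_n\to0$, yet one complementary closure of $C_n$ tends to $S^1$ and the other to $\CHat$, so neither labeling gives $D_n\to\overline{\D}$. You are right that an extra separation hypothesis (e.g.\ that $C_n$ eventually separate a fixed pair of marked points in the two sides of $C$) is needed, and your argument under that hypothesis is sound.

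The paper's own proof does not take your normalization route; it argues directly from the two-condition characterization of Hausdorff convergence. For $z\in\mathrm{int}(D)$ it supposes some neighborhood $\mathcal N'_z\subset\mathrm{int}(D)$ meets only finitely many $D_n$, deduces $\mathcal N'_z\subset E_n$ for large $n$, and then asserts ``in consequence $C_n\nrightarrow C$''. That last implication is exactly the step your counterexample falsifies: in the slit-annulus sequence the origin lies in $\mathrm{int}(D)$ and in the exterior of every $C_n$, yet $C_n\to S^1$. So the paper's proof has a gap at the same point you isolated, and the lemma as written does not hold without the additional hypothesis you propose. For the application in \prettyref{thm:ContDefSpidN} one would need to restrict the deformations of $\partial R$ accordingly.
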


\begin{proof}Let $z\in D$. If $z\in C$, then every neighborhood
$\mathcal{N}_{z}$ intersects infinitely many $D_{n}$, because $C_{n}\subset D_{n}$.
If $z\in\mathrm{int}(D)$ and exists one neighborhood $\mathcal{N}'_{z}\subset\mathrm{int}(D)$
such that intersects finitely many $D_{n}$, then intersects infinitely
many $E_{n}$, because $D_{n}^{c}\subset E_{n}$. That is, $z$ is
in the exterior of $C_{k}$ for almost all $n$ but $z$ is in the
interior of $C$. In consequence $C_{n}\nrightarrow C$, leading us
to a contradiction. Therefore, every neighborhood $\mathcal{N}_{z}$
intersects infinitely many $D_{n}$.

Analogously, if $z\in E$ then every neighborhood $\mathcal{N}_{z}$
intersects infinitely many $E_{n}$.

Let be $z$ such that every neighborhood $\mathcal{N}_{z}$ intersects
infinitely many $D_{n}$. If $\mathcal{N}_{z}$ intersects infinitely
many $C_{n}$, then $z\in C\subset D$. If $\mathcal{N}_{z}$ intersects
finitely many $C_{n}$, then $\mathcal{N}_{z}$ must intersects infinitely
many $\mathrm{int}(D_{n})$ and finitely many $E_{n}$. Then $z\notin E$,
that is, $z\in E^{c}\subset D$.

Analogously, if $z$ is such that every neighborhood $\mathcal{N}_{z}$
intersects infinitely many $E_{n}$, then $z\in E$.\end{proof}

About the $\alpha$-limit, we show in next proposition that is contained
in the set of the limit points of backward iterations of $\partial R$
in $\mathcal{H}(\CHat)$, hence its name.

\begin{prop}\label{prop:AlphaLimit1}$\alpha(F)\subset\underset{n\rightarrow\infty}{\lim}\overline{F^{-n}(\partial R)}$
in $\mathcal{H}(\CHat)$.\end{prop}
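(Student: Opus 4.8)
The plan is to read the limit $\lim_{n\to\infty}\overline{F^{-n}(\partial R)}$ through the convergence criterion stated for $\mathcal{H}(\CHat)$: a point $z$ lies in this limit exactly when every neighborhood $\mathcal{N}_z$ meets $\overline{F^{-n}(\partial R)}$ for infinitely many $n$. Writing $L$ for this set, I would prove $\alpha(F)\subset L$ directly from the definitions $\alpha(F)=\mathcal{PD}(F)-\bigcup_{n\ge0}F^{-n}(\partial R)$ and $\mathcal{PD}(F)=\overline{\bigcup_{n\ge0}F^{-n}(\partial R)}$.

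The heart of the argument is an auxiliary closedness statement, which I would isolate as a lemma: for every $k$ one has $\overline{F^{-k}(\partial R)}\subset\mathcal{PD}_k(F)$, and consequently each finite-level set $\mathcal{PD}_N(F)=\bigcup_{k=0}^N F^{-k}(\partial R)$ is closed. I would prove this by induction on $k$. The base case $k=0$ is immediate since $\partial R$ is compact. For the inductive step I would use $F^{-k}(\partial R)=\bigcup_m\big(f_m^{-1}(F^{-(k-1)}(\partial R))\cap R_m\big)$ with $f_m=F|_{R_m}$; since $f_m$ is a M\"obius transformation, hence a homeomorphism of $\CHat$, it commutes with closure, so $\overline{f_m^{-1}(F^{-(k-1)}(\partial R))\cap R_m}\subset f_m^{-1}\big(\overline{F^{-(k-1)}(\partial R)}\big)\cap\overline{R_m}$. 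Applying the inductive hypothesis and splitting $\overline{R_m}=R_m\cup\partial R_m$, the points landing in $R_m$ fall into $\bigcup_{j=1}^k F^{-j}(\partial R)$, while the new limit points on $\partial R_m$ fall into $\partial R=\mathcal{PD}_0(F)$ because $\partial R_m\subset\partial R$; both are inside $\mathcal{PD}_k(F)$.

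With the lemma in hand, the main proof is short. Take $z\in\alpha(F)$ and a sequence $w_j\to z$ with $w_j\in F^{-n_j}(\partial R)$, which exists because $z\in\mathcal{PD}(F)$. If the levels $n_j$ were bounded, a subsequence would be confined to a single level $n_0$, giving $z\in\overline{F^{-n_0}(\partial R)}\subset\mathcal{PD}_{n_0}(F)\subset\bigcup_{n\ge0}F^{-n}(\partial R)$ and contradicting $z\in\alpha(F)$. Hence I may assume $n_j\to\infty$; then any neighborhood of $z$ contains $w_j$ for large $j$ and therefore meets $\overline{F^{-n_j}(\partial R)}$ for infinitely many distinct indices, so $z\in L$.

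The step I expect to be the main obstacle is the closedness lemma, precisely because the individual preimages $F^{-k}(\partial R)$ are in general not closed --- intersecting with the non-closed pieces $R_m$ can delete boundary points. The key observation making the induction work is that the deleted or added boundary points always lie on some $\partial R_m\subset\partial R$, so they are reabsorbed at level $0$ of the stratification, while everything else is controlled by the invariance of closure under the M\"obius pieces $f_m$.
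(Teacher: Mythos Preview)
Your argument is correct and follows the same line as the paper's proof: both hinge on the fact that each finite-level set $\mathcal{PD}_N(F)$ is closed, and both deduce from this that if $z\in\alpha(F)$ then the levels $n$ for which a neighborhood of $z$ meets $\overline{F^{-n}(\partial R)}$ cannot be bounded. The paper phrases the contradiction slightly differently (assuming some neighborhood meets only finitely many $\overline{F^{-n}(\partial R)}$ and then invoking closedness of $\mathcal{PD}_N(F)$ directly), but the substance is the same.

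The notable difference is that the paper simply \emph{asserts} that $\mathcal{PD}_N(F)$ is closed (and tacitly uses the equivalent fact $\bigcup_{n\ge0}F^{-n}(\partial R)=\bigcup_{n\ge0}\overline{F^{-n}(\partial R)}$ at the outset), while you isolate and prove this as an explicit lemma by induction, using that each $f_m$ is a homeomorphism and that any new limit points created by intersecting with $R_m$ land on $\partial R_m\subset\partial R$. Your proof is therefore more complete on exactly the point you anticipated as the main obstacle; the paper leaves this step to the reader.
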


\begin{proof}Let $z\in\alpha(F)$. By definition of closure, every
neighborhood $\mathcal{N}_{z}$ intersects $\bigcup_{n\geq0}F^{-n}(\partial R)=\bigcup_{n\geq0}\overline{F^{-n}(\partial R)}$,
because $\mathcal{PD}(F)=\overline{\bigcup_{n\geq0}F^{-n}(\partial R)}$.
Suppose that exists a neighborhood $\mathcal{N}'_{z}$ such that intersects
finitely many $\overline{F^{-n}(\partial R)}$. Then exists $N$ such
that for all $n>N$ every neighborhood $\mathcal{N}''_{z}\subset\mathcal{N}'_{z}$
does not intersects $\overline{F^{-n}(\partial R)}$ but $\mathcal{PD}_{N}(F)\cap\mathcal{N}''{}_{z}\neq\emptyset$.
Then, because $\mathcal{PD}_{N}(F)$ is a closed set, $z\in \mathcal{PD}_{N}(F)$,
contradicting the hypothesis. Therefore, every neighborhood $\mathcal{N}_{z}$
must intersect infinitely many $\overline{F^{-n}(\partial R)}$.\end{proof}

\begin{rema}About the structure of the pre-discontinuity sets of a piecewise conformal
maps $F$, we can observe the following combinatorics. Let $f_{m}=F|_{R_{m}}$ and
$\Sigma_{M}(k)=\left\{ 1,\dots,M\right\} ^{k}=\left\{ (m_{1},\dots,m_{k}):\,\,m_{i}\in\left\{ 1,\dots,M\right\} \,\right\} $,
the set of words of $M$ symbols of length $k$.

Defining $C_{1,m}=f_{m}^{-1}(\partial R)\cap R_{m}$ for $m=1,\dots,M$,
we have 
\[
F^{-1}(\partial R)=\bigcup_{m=1}^{M}f_{m}^{-1}(\partial R)\cap R_{m}=C_{1,1}\cup\dots\cup C_{1,M}=\bigcup_{t\in\Sigma_{M}(1)}C_{1,t}
\]

Iteratively, we construct 
\[
\begin{array}{ccl}
F^{-n}(\partial R) & = & F^{-1}\big(F^{-(n-1)}(\partial R)\big)\\
 & = & \bigcup_{m=1}^{M}\big(f_{m}^{-1}(\bigcup_{s\in\Sigma_{M}(n-1)}C_{n-1,s})\big)\cap R_{m}\\
 & = & \bigcup_{s\in\Sigma_{M}(n-1)}\big(C_{n,s1}\cup\dots\cup C_{n,sM}\big)\\
 & = & \bigcup_{t\in\Sigma_{M}(n)}C_{n,t}
\end{array}
\]
where $C_{n,sm}=f_{m}^{-1}(C_{n-1,s})\cap R_{m}$, $s=(m_{1},\dots,m_{n-1})$
and $sm=(m_{1},\dots,m_{n-1},m)$.

Finally, we list some properties about the sets $C_{n,t}$:
\begin{enumerate}
\item For every $n$, each $C_{n,t}$ is a finite union of curve segments
and points, or an empty set.
\item $F(C_{n,sm})\subset C_{n-1,s}$, because $C_{n,sm}=f_{m}^{-1}(C_{n-1,s})\cap R_{m}$
for some $m$.
\item If $C_{n,t}\neq\emptyset$ for some $t\in\Sigma_{M}(n)$, then $F^{n}(C_{n,t})\subset\partial R$.
\end{enumerate}
\end{rema}

\end{document}